\def \A {{\mathbb A}}
\def \C {{\mathbb C}}
\def \N {{\mathbb N}}
\def \Q {{\mathbb Q}}
\def \R {{\mathbb R}}
\def \Z {{\mathbb Z}}
\def \d {\,{\rm d}}
\def\re{{\Re e\,}}
\def\im{{\Im m\,}}
\def\le{\leqslant}
\def\leq{\leqslant}
\def\ge{\geqslant}
\theoremstyle{plain}
\newtheorem{theorem}{Theorem}
\newtheorem{lemma}{Lemma}[section]
\theoremstyle{remark}
\newtheorem{remark}{Remark}
\theoremstyle{definition}
\numberwithin{equation}{section}
\newcommand{\bpm}{\begin{pmatrix}}
\newcommand{\epm}{\end{pmatrix}}
\newcommand{\bsm}{\lt(\begin{smallmatrix}}
\newcommand{\esm}{\end{smallmatrix}\rt)}
\newcommand{\lt}{\left}
\newcommand{\rt}{\right}
\begin{document}

\vglue 0mm

\title[Local behavior of arithmetical functions]
{Local behavior of arithmetical functions with applications to automorphic $L$-functions}
\author{Yuk-Kam Lau, Jianya Liu \& Jie Wu}

\dedicatory{Dedicated to Kai-Man Tsang on the occasion of his 60th birthday}

\address{
Yuk-Kam Lau
\\
Department of Mathematics
\\
The University of Hong Kong
\\
Pokfulam Road
\\
Hong Kong}
\email{yklau@maths.hku.hk}
\address{
Jianya Liu
\\
School of Mathematics
\\
Shandong University
\\
Jinan, Shandong 250100
\\
China}
\email{jyliu@sdu.edu.cn}
\address{%
Jie Wu\\
CNRS\\
Institut \'Elie Cartan de Lorraine\\
UMR 7502\\
F-54506 Van\-d\oe uvre-l\`es-Nancy\\
France}
\curraddr{%
Université de Lorraine\\
Institut \'Elie Cartan de Lorraine\\
UMR 7502\\
F-54506 Van\-d\oe uvre-l\`es-Nancy\\
France
}
\email{jie.wu@univ-lorraine.fr}

\begin{abstract}
We derive a Voronoi-type series approximation for the local weighted mean of an arithmetical function that is associated 
to Dirichlet series satisfying a functional equation with gamma factors. 
The series is exploited to study the oscillation frequency with  a method of Heath-Brown and Tsang \cite{HBT94}. A by-product is another proof for the well-known result of no element in the Selberg class of degree $0<{\rm d}<1$. Our major applications include
the sign-change problem of the coefficients of automorphic $L$-functions for $GL_m$, which improves significantly some results of Liu and Wu \cite{LiuWu2015}. The cases of modular forms of half-integral weight and Siegel eigenforms are also considered. 
\end{abstract}

\subjclass[2000]{11M37, 11F66,11F30}
\keywords{Local weighted mean, Arithmetical functions,
Sign-changes}
\maketitle


\section{Introduction and main results}\label{sec_intro}

The functional equation satisfied by the Riemann zeta function is a prototype of the salient features of many interesting Dirichlet series  $\sum_n a_n \lambda_n^{-s}$, including the Selberg class of $L$-functions whose theory were greatly advanced recently. This general class were studied quite long time ago, for instance, Chandrasekharan and Narasimhan \cite{ChandrasekharanNarasimhan1962} obtained nice Voronoi-type series approximation for the Riesz means of the coefficients $a_n$. Often the Voronoi  series are effective for the study of many interesting properties such as the mean square formula, omega results and the occurrence of sign-changes in \cite{Lau1999} and \cite{LT2002} respectively.  The Riesz mean carries  a weight function whose smoothing effect leads to permissible convergence of approximations. However this weight (of the Riesz mean) is not suitable for localizing $a_n$ within a narrow range  and hence not for local means. In practice the local mean can be applied to depict the oscillations. 

The purpose of this paper is two-fold. Firstly we provide a Voronoi-type series to the local weighted mean of $a_n$, 
which is novel, and deduce the occurrence of oscillations over short intervals.  A cute consequence is an alternative argument for the emptiness of the Selberg class of degree $0<{\rm d}<1$. Secondly the oscillation result is applied to the sign-change problem of the Dirichlet series coefficients of automorphic $L$-functions. The sign problem drew good attentions (e.g. \cite{LauLiuWu2012, LiuQuWu2011, LiuWu2015, MatomakiRadziwill2015, MeherMurty2014}) and the widely used approach, especially for somewhat general situations, 
is based on the first and second moments. 
This approach, however, may not be efficient for high rank groups  due to the poor order estimate for the Rankin-Selberg $L$-function which is the key ingredient to evaluate the second moment. Here the oscillation is detected with the method in Heath-Brown 
and Tsang \cite{HBT94}. Our application in Section~\ref{sec_application} shows not only the utility of the Voronoi series 
for local weighted means but also the effectiveness of the method in \cite{HBT94}. 
Theorems~\ref{thm1}-\ref{thm2.5} of this section are for the general context, 
and the specific results for automorphic $L$-functions are in Section~\ref{sec_application}.

\subsection{Assumptions}\

We are concerned with the class of arithmetical functions considered as in 
\cite{ChandrasekharanNarasimhan1962, Berndt1971, Hafner1981} with a little variant. 
Let $\{a_n\}$ and $\{b_n\}$ be two sequences in $\C$,  where  $a_1$ and $b_1$ are nonzero, and $\{\lambda_n\}$ and $\{\mu_n\}$ be two strictly increasing positive number sequences tending to $\infty$. We assume the following conditions.
\begin{enumerate}
\item[(A1)] 
The two series 
$$
\phi(s) := \sum_{n\ge 1} a_n \lambda_n^{-s}
\qquad\text{and}\qquad
\psi(s) := \sum_{n\ge 1} b_n \mu_n^{-s}
$$ 
converges absolutely in some half-plane $\re s\ge \sigma^*$ for some  constant $\sigma^*>0$. 
\item[(A2)] 
Suppose $\alpha_\nu>0$ and $\beta_\nu, \widetilde{\beta}_\nu \in \C$ for $\nu=1, 2, \dots, d$, and let 
\begin{eqnarray*}
\Delta(s) := \prod_{1\le \nu\le d} \Gamma(\alpha_\nu s+\beta_\nu) 
\quad 
\mbox{and} 
\quad
\widetilde{\Delta}(s) := \prod_{1\le \nu\le d} \Gamma(\alpha_\nu s+\widetilde{\beta}_\nu).
\end{eqnarray*}
Then $\phi(s)$ and $\psi(s)$ satisfy the functional equation
\begin{eqnarray*}
\Delta(s) \phi(s) = \omega \widetilde{\Delta}(1-s) \psi(1-s)
\end{eqnarray*}
for some constant $\omega\in \C$ of modulus $1$.\addtocounter{footnote}{1}\footnote{Here we allow different sets of the archimedean parameters $\beta_\nu$ on the two sides in order to cover the Selberg class and  to specialize to the case $\delta=1$ by replacing $b_n$ with $b_n\mu_n^{1-\delta}$. So the generality is not lost.}
\item[(A3)] The function $\Delta(s) \phi(s)$ (and hence $\widetilde{\Delta}(s) \psi (s)$) extends meromorphically  to the whole $\C$. All singularities of $\Delta(s) \phi(s)$ fall inside the  disk  $\mathcal{D}$ enclosed by the anti-clockwise circle $\mathcal{C}$: $|z|=R$ for some $R>0$. As  $\Delta(s)^{-1}$ is entire, all poles of $\phi(s)$ lie in $\mathcal{D}$.  
Moreover, $\Delta(\sigma +\text{i}t)\phi(\sigma+\text{i}t)$ tends to $0$ uniformly in every vertical strip $b\le \sigma \le b'$ 
as $|t|\to \infty$. 
\end{enumerate}

\vskip 2mm

Define 
\begin{align*}
A 
& := \alpha_1 + \alpha_2 + \cdots + \alpha_d \,,
\\
B 
& := \beta_1 + \beta_2 + \cdots + \beta_d\,, 
\\
\widetilde{B}
& := \widetilde{\beta}_1 + \widetilde{\beta}_2 + \cdots + \widetilde{\beta}_d\,,
\end{align*}
and 
\begin{align*}
h 
& :=  \prod_{1\le \nu\le d} \bigg(\frac{2A}{\alpha_\nu}\bigg)^{2 \alpha_\nu},  
\\\noalign{\vskip -1mm}
a 
& := \frac1{4A} - \frac12 - \frac{B-\widetilde{B}}{2A} = -\vartheta + \text{i}\xi \, , 
\\ 
k 
& := \frac{d}2 -\frac14 - \frac{A+B+\widetilde{B}}2 =\kappa+\text{i}\eta\, .
\end{align*}
Thus $A>0$, $h>0$ while $B, \widetilde{B}, a, k\in \C$ (with $\vartheta,\xi,\kappa,\eta\in\R$). 

\begin{remark} 
In \cite{ChandrasekharanNarasimhan1962, Berndt1971, Hafner1981}, 
the gamma factors on both sides of the functional equation are equal, i.e. $\Delta(s)=\widetilde{\Delta}(s)$. 
However $\widetilde{\Delta}(s) := \overline{\Delta(\overline{s})}$ for the Selberg class. 
\end{remark}

\subsection{Set-up and results}\label{ssec_setting}\

We start with any fixed smooth function $\varphi_0$ supported on $[-1, 1]$ such that
\begin{enumerate}
\item[(i)] $0\le \varphi_0(u)\le 1$ on $[-1, 1]$, and 
\par
\vskip 1mm
\item[(ii)] $1\le \int_\R\varphi_0(u) \d u \le 2$. 
\end{enumerate}
Let $\delta>0$ be any quantity and $X$ a large number.   
Set  
$$
L := \delta^{-1} X^{1/(2A)}
\qquad
\text{and}
\qquad
\varphi(u) := \varphi_0((u-1)L).
$$ 
Then $\varphi$ is  smooth and compactly supported on $(0,\infty)$ with $0\le \varphi(u)\le 1$ on $[1-L^{-1}, 1+L^{-1}]$ 
and zero elsewhere. Its derivatives satisfy 
\begin{equation}\label{derivation_varphi}
\|\varphi^{(r)}\|_\infty \ll_{\varphi_0,r} L^r
\quad
\text{for $r\ge 0$}
\end{equation} 
and, moreover,
\begin{equation}\label{integral_varphi}
L^{-1} \le \int_0^\infty \varphi(u) \d u \le 2L^{-1}.
\end{equation} 
Denote by 
$$
\widehat{\varphi}(s) := \int_0^\infty \varphi(u) u^{s-1}\d u
$$ 
its Mellin transform. Then $\widehat{\varphi}$ is entire and 
\begin{equation}\label{UB_hatvarphi}
\widehat{\varphi}(s)\ll_r L^{r-1} (1+|s|)^{-r}
\quad\text{for all $r\ge 0$.}
\end{equation} 

Let $\sigma_1>\max(\sigma^*,R)$. Then the disk  $\mathcal{D}$ lies in the half-plane $\re s<\sigma_1$.   For $x\in [X, 4X]$, we have
\begin{eqnarray*}
\sum_{n\ge 1} a_n \varphi\bigg(\frac{\lambda_n}{x}\bigg) 
= \frac1{2\pi \text{i}}\int_{(\sigma_1)} \phi(s) \widehat{\varphi}(s)x^s \d s.
\end{eqnarray*}
Take a constant $\sigma_2$ such that 
$$
1-\sigma_2 >  \max\big\{\sigma^*, 1+R, -\alpha_1^{-1}\widetilde{\beta}_1, \dots, -\alpha_d^{-1}\widetilde{\beta}_d\big\}.
$$ 
Then $\mathcal{D}$ lies in the right side of $\re s=\sigma_2$ and all poles of $\widetilde{\Delta}(s)$ and  $\widetilde{\Delta}(s)\psi(s)$ lie in  $\re s<1-\sigma_2$. 
We shift the line of integral to $\re s=\sigma_2$ and apply the functional equation with a change of variable $s$ into $1-s$, and thus infer
\begin{equation}\label{Sphi}
\begin{aligned}
S_\varphi(x)
& :=  \sum_{n\ge 1} a_n \varphi\bigg(\frac{\lambda_n}{x}\bigg)
-M_\varphi (x)
\\\noalign{\vskip -1mm}
& \; = \frac{\omega}{2\pi \text{i}} \int_{(1-\sigma_2)} \frac{\widetilde{\Delta}(s)}{\Delta(1-s)}\psi(s) \widehat{\varphi}(1-s)x^{1-s} \d s
\\\noalign{\vskip 1mm}
& \;= \omega  \sum_{n\ge 1} \frac{b_n}{\mu_n} I(\mu_nx),
\end{aligned}
\end{equation}
where  
\begin{eqnarray}\label{M}
M_\varphi(x):= \frac1{2\pi \text{i}} \int_{\mathcal{C}} \phi(s) \widehat{\varphi}(s)x^s \d s 
\end{eqnarray}
and
\begin{eqnarray}\label{I}
I(y) := \frac1{2\pi \text{i}} \int_{(1-\sigma_2)} \frac{\widetilde{\Delta}(s)}{\Delta(1-s)} \widehat{\varphi}(1-s)y^{1-s} \d s.
\end{eqnarray}

Our first theorem is a Voronoi-type series approximation for the local weighted mean of $a_n$.

\begin{theorem}\label{thm1} 
Let $\delta>0$ be fixed and $X$ any sufficiently large number. Under the above assumptions {\rm (A1)}-{\rm (A3)} and notation,  
the function $S_\varphi(x)$ defined in \eqref{Sphi} satisfies  the asymptotic   expansion
\begin{equation}\label{main}
S_\varphi(x) 
= \frac{\omega e_0}{2Ah}  (hx)^{1-\vartheta+{\rm i}\xi} S_{\varphi,0}(x) 
+ O\big(L^{-1} x^{1-\vartheta-1/(2A)}\big)
\end{equation}
for any $x\in [X, 4X]$, where $e_0\in \C^\times$ is given by
\begin{equation}\label{e0}
e_0 
:= \sqrt{\frac2{\pi}} \prod_{\nu=1}^d\left(\frac{2A}{\alpha_\nu}\right)^{\alpha_\nu+\beta_\nu-\widetilde{\beta}_\nu}
\end{equation}
and 
$$
S_{\varphi,0}(x) 
:= \sum_{n\ge 1} \frac{b_n}{\mu_n^{\vartheta-{\rm i}\xi}}
\int_0^\infty \frac{\varphi(u)}{u^{\vartheta}} u^{{\rm i}\xi}\cos\big( (h\mu_n x u)^{1/(2A)} + k\pi\big) \d u.
$$
Besides we have 
$$
S_\varphi(x)\ll x^{1-\vartheta} L^{-1}
$$ 
for all $x\in [X, 4X]$. 
The implied constants in the $O$-term and $\ll$-symbol 
depend only on $\varphi_0$, $\delta$ and the parameters in {\rm (A1)} and {\rm (A2)}. 
\end{theorem}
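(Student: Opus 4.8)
The plan is to determine the asymptotics of the Voronoi kernel $I(y)$ of \eqref{I} by combining Stirling's formula for the gamma quotient with a saddle--point (steepest--descent) evaluation of the Mellin--Barnes integral, and then to feed the expansion into the identity $S_\varphi(x)=\omega\sum_{n\ge1}b_n\mu_n^{-1}I(\mu_nx)$ of \eqref{Sphi}. Convergence of this sum and the size of the error will be governed by the absolute convergence of $\psi$ on $\re s=1-\sigma_2$ together with the rapid decay, furnished by \eqref{derivation_varphi} and \eqref{UB_hatvarphi}, of the oscillatory integrals over the short support of $\varphi$ as soon as the argument exceeds $L^{2A}$.

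\textbf{Step 1: the gamma quotient.} Applying Stirling's formula $\log\Gamma(z)=(z-\tfrac12)\log z-z+\tfrac12\log(2\pi)+O(|z|^{-1})$ to each of the $2d$ factors of $\widetilde\Delta(s)/\Delta(1-s)=\prod_{\nu}\Gamma(\alpha_\nu s+\widetilde\beta_\nu)/\Gamma(\alpha_\nu-\alpha_\nu s+\beta_\nu)$ gives, in any fixed vertical strip and for $|\im s|\to\infty$, an identity $\widetilde\Delta(s)/\Delta(1-s)=\mathfrak g_\pm(s)\big(1+O(|s|^{-1})\big)$ in the upper/lower half-plane, where $\mathfrak g_\pm(s)$ is an explicit elementary factor --- a product of powers of $s$, powers of the fixed constants $2A$ and $\alpha_\nu$, an exponential $e^{-2As}$, and the oscillatory piece $e^{\mp\ic\pi As}$ --- the various constants being organized so as to reassemble, after Step 2, into $h$, $e_0$ and the phase $k\pi$ of the statement. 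One also notes the vertical-line bound $\widetilde\Delta(s)/\Delta(1-s)\ll(1+|s|)^{A(1-2\re s)+\re(\widetilde B-B)}$, which with \eqref{UB_hatvarphi} keeps all integrals below absolutely convergent.

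\textbf{Step 2: the kernel.} Since $\widehat\varphi(1-s)y^{1-s}=\int_0^\infty\varphi(u)u^{-1}(uy)^{1-s}\d u$, deform the line $\re s=1-\sigma_2$ in \eqref{I} to a contour $\gamma$ through the two conjugate saddle points $s_\pm=\pm\ic(y/P)^{1/(2A)}$ of the exponent $\log\mathfrak g_\pm(s)+(1-s)\log y$, where $P:=\prod_\nu\alpha_\nu^{2\alpha_\nu}=(2A)^{2A}/h$; this meets no pole of $\widetilde\Delta/\Delta$, all of which lie at bounded height while $\gamma$ stays at height $\gg(y/P)^{1/(2A)}$ wherever $\re s$ is small. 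Interchanging, $I(y)=\int_0^\infty\varphi(u)u^{-1}\mathcal K_\gamma(uy)\d u$ with $\mathcal K_\gamma(w):=\frac1{2\pi\ic}\int_\gamma\frac{\widetilde\Delta(s)}{\Delta(1-s)}w^{1-s}\d s$, and a standard saddle--point evaluation of $\mathcal K_\gamma$ --- the exponent at $s_\pm$ producing $e^{\mp\ic(hw)^{1/(2A)}}$, the power of $s$ at $s_\pm$ together with the Gaussian fluctuation $(2A/s_\pm)^{-1/2}\asymp w^{1/(4A)}$ producing the amplitude $(hw)^{1-\vartheta+\ic\xi}$, and the residual fixed factors producing $e_0/(2Ah)$ --- yields
\[
\mathcal K_\gamma(w)=\frac{e_0}{2Ah}(hw)^{1-\vartheta+\ic\xi}\cos\big((hw)^{1/(2A)}+k\pi\big)+O\!\big(w^{1-\vartheta-1/(2A)}e^{\pm\ic(hw)^{1/(2A)}}\big),
\]
where the $O$-term stands for a bounded combination of oscillatory terms of the next order in Stirling (recall $|s_\pm|\asymp w^{1/(2A)}$). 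Setting $w=uy$ and using $u^{1-\vartheta+\ic\xi}u^{-1}=u^{-\vartheta+\ic\xi}$, the leading part of $\int_0^\infty\varphi(u)u^{-1}\mathcal K_\gamma(uy)\d u$ equals $I_0(y):=\frac{e_0}{2Ah}(hy)^{1-\vartheta+\ic\xi}\int_0^\infty\varphi(u)u^{-\vartheta}u^{\ic\xi}\cos\big((hyu)^{1/(2A)}+k\pi\big)\d u$. Integrating by parts $r$ times in $u$ against the oscillation $e^{\pm\ic(huy)^{1/(2A)}}$ and invoking \eqref{derivation_varphi} bounds the contribution of the $O$-term by $\ll_r L^{-1}y^{1-\vartheta-1/(2A)}\big(1+L^{-1}y^{1/(2A)}\big)^{-r}$ and bounds $I_0(y)$ by $\ll_r L^{-1}y^{1-\vartheta}\big(1+L^{-1}y^{1/(2A)}\big)^{-r}$; hence $I(y)\ll L^{-1}y^{1-\vartheta}$ and $I(y)=I_0(y)+O\big(L^{-1}y^{1-\vartheta-1/(2A)}\big)$, both terms being negligible once $y\gg L^{2A}$.

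\textbf{Step 3: summation, and the main obstacle.} Putting $I(\mu_nx)=I_0(\mu_nx)+O(\cdots)$ into \eqref{Sphi} and using $(h\mu_nx)^{1-\vartheta+\ic\xi}\mu_n^{-1}=(hx)^{1-\vartheta+\ic\xi}\mu_n^{-\vartheta+\ic\xi}$, the $I_0$-terms sum to $\frac{\omega e_0}{2Ah}(hx)^{1-\vartheta+\ic\xi}S_{\varphi,0}(x)$; the series converges because, as $L^{-1}(\mu_nx)^{1/(2A)}\asymp\delta\,\mu_n^{1/(2A)}$, the terms with $\mu_n\gg\delta^{-2A}$ are negligible by Step 2 while the rest form a finite partial sum. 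The error contributes $\ll L^{-1}x^{1-\vartheta-1/(2A)}\sum_{n\ge1}|b_n|\mu_n^{-\vartheta-1/(2A)}\big(1+\delta\,\mu_n^{1/(2A)}\big)^{-r}$, which is $\ll_{r,\delta}L^{-1}x^{1-\vartheta-1/(2A)}$ once $r$ is large enough that $\vartheta+r/(2A)>\sigma^*$ (so that the $\mu_n$-sum converges); this proves \eqref{main}. The same splitting gives $S_{\varphi,0}(x)\ll_\delta L^{-1}$, and since $|(hx)^{1-\vartheta+\ic\xi}|=(hx)^{1-\vartheta}$ we conclude $S_\varphi(x)\ll x^{1-\vartheta}L^{-1}$. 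I expect the genuinely delicate point to be the \emph{uniformity in $y$} of the saddle--point analysis of $\mathcal K_\gamma$, in particular the interplay between the saddle location $|s_\pm|\asymp y^{1/(2A)}$ and the scale $L$ at which $\widehat\varphi$ becomes negligible: the transition $y\asymp L^{2A}$ has to be handled with care, and it is exactly this $\varphi$-decay that both delivers the $y^{-1/(2A)}$-saving in the error and renders the sum over $n$ convergent to the stated order. The constant bookkeeping of Step 1 (identifying $e_0$, $h$ and $k\pi$) is lengthy but routine.
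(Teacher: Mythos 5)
Your overall strategy is sound and your target formulas are exactly right: the asymptotic you claim for the unsmoothed kernel $\mathcal K_\gamma(w)$, namely $\frac{e_0}{2Ah}(hw)^{1-\vartheta+\ic\xi}\cos\big((hw)^{1/(2A)}+k\pi\big)$ plus a correction one factor $w^{-1/(2A)}$ smaller, coincides with what the paper obtains, and your Step~3 (integration by parts in $u$ to gain $(1+\delta\mu_n^{1/(2A)})^{-r}$, then choosing $r$ with $\vartheta+r/(2A)>\sigma^*$ so that (A1) makes the $n$-sum converge) is essentially the paper's Section~4 verbatim. The genuine difference is Step~2: you separate $\widehat\varphi$ from the Mellin--Barnes integral and evaluate the bare kernel by steepest descent, whereas the paper never performs a stationary-phase argument at all. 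It expands the gamma quotient (Lemma~\ref{lem1}) into terms $e_j\,h^{-s}\Gamma(2A(s+a)-j)\cos(\pi A(s+a)+k\pi)$, keeps $\widehat\varphi(1-s)$ inside, and evaluates each term \emph{exactly} by the change of variable $z=2A(s+a)-j$ and the Mellin pair $\frac1{2\pi\ic}\int_{(c)}\Gamma(z)\cos(\tfrac12\pi z+\alpha)y^{-z}\d z=\cos(y+\alpha)$, $0<c<\tfrac12$ (Lemma~\ref{lem2}); all interchanges are then absolutely convergent thanks to \eqref{UB_hatvarphi}. This buys a much cleaner treatment of exactly the two points you flag as delicate.

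Two places in your sketch are genuine gaps rather than routine bookkeeping. First, the object $\mathcal K_\gamma(w)=\frac1{2\pi\ic}\int_\gamma\frac{\widetilde\Delta(s)}{\Delta(1-s)}w^{1-s}\d s$ is not absolutely convergent on vertical lines: on $\re s=\sigma$ the quotient has modulus $\asymp|t|^{A(2\sigma-1)+\re(\widetilde B-B)}$ (note your Step~1 bound has the sign of $A(1-2\re s)$ reversed; harmless there because $\widehat\varphi$ decays super-polynomially, but fatal once $\widehat\varphi$ is removed), so the interchange defining $\mathcal K_\gamma$ and the deformation to $\gamma$ need a contour bent to the left in the tails and a justification of conditional convergence/interchange that you do not supply; the paper's device of leaving $\widehat\varphi(1-s)$ in the integrand avoids this entirely. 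Second, and more seriously, your error budget in Step~3 assumes the saddle-point remainder can itself be integrated by parts $r$ times in $u$. A one-term saddle-point estimate gives only a pointwise bound $O(w^{1-\vartheta-1/(2A)})$; writing an oscillatory factor inside an $O(\cdot)$ does not license integration by parts unless you control $r$ derivatives of the remainder's amplitude. Without that extra factor $(1+\delta\mu_n^{1/(2A)})^{-r}$, the error contribution to \eqref{Sphi} is $L^{-1}x^{1-\vartheta-1/(2A)}\sum_n|b_n|\mu_n^{-\vartheta-1/(2A)}$, which diverges whenever $\sigma^*>\vartheta+1/(2A)$, i.e.\ in general. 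The fix is precisely the paper's: expand to an order $J_0\ge J'+\lceil 2A(\sigma^*-\vartheta)+\tfrac12\rceil_+$ depending on $\sigma^*$, so that the non-oscillatory remainder is already small enough to be summed trivially, and treat each of the finitely many explicit oscillatory correction terms $S_{\varphi,j}$, $1\le j\le J_0$, by the integration-by-parts bound \eqref{ibound}. With your Step~2 replaced either by such a higher-order expansion (with differentiable error) or by the paper's exact Mellin evaluation, the rest of your argument goes through.
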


With the help of Heath-Brown and Tsang \cite{HBT94}, 
we derive from Theorem \ref{thm1} the following result concerning sign changes.

\begin{theorem}\label{thm2}  
Under the assumptions {\rm (A1)}-{\rm (A3)}, there exist small $\delta>0$ and positive constants $c_0,c_+,c_-$ 
which depend at most on the parameters in {\rm (A1)} and {\rm (A2)} such that for all sufficiently large $x\ge X_0(\delta)$, the two inequalities
$$
\pm \re\bigg(\varsigma^{-1} \frac{S_\varphi(x_\pm )}{ ( \mu_1hx_\pm)^{{\rm i}\xi}}\bigg) >c_\pm x^{1-\vartheta} L^{-1} 
$$
hold for some $x_+, x_-\in [x-c_0 x^{1-1/(2A)}, x+c_0 x^{1-1/(2A)}]$, where $\varsigma := \omega e_0 b_1/|b_1|$ and 
the parameter $L$ in the set-up for $\varphi$ takes the value  $\delta^{-1} x^{1/(2A)}$.
\end{theorem}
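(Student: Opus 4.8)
The plan is to apply the method of Heath-Brown and Tsang \cite{HBT94} to the main term of the Voronoi-type expansion \eqref{main}. By Theorem~\ref{thm1}, after dividing by $(\mu_1hx)^{\mathrm{i}\xi}$ and extracting the real part, we are reduced to showing that the normalized main term
$$
F(x) := \re\bigg(\frac{\omega e_0}{2Ah} (hx)^{1-\vartheta} (h\mu_1 x)^{\mathrm{i}\xi}(\mu_1 h x)^{-\mathrm{i}\xi} \cdot \frac{S_{\varphi,0}(x)}{(hx)^{\mathrm{i}\xi}(\mu_1 h x)^{-\mathrm{i}\xi}}\bigg)
$$
takes values both $>c_+ x^{1-\vartheta}L^{-1}$ and $<-c_- x^{1-\vartheta}L^{-1}$ on the short interval $[x-c_0x^{1-1/(2A)}, x+c_0x^{1-1/(2A)}]$, with an acceptable error from the $O$-term. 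First I would isolate the $n=1$ contribution to $S_{\varphi,0}(x)$, namely $\frac{b_1}{\mu_1^{\vartheta-\mathrm{i}\xi}}\int_0^\infty \varphi(u)u^{-\vartheta}u^{\mathrm{i}\xi}\cos((h\mu_1 xu)^{1/(2A)}+k\pi)\,\dd u$; with $L=\delta^{-1}x^{1/(2A)}$, the phase $(h\mu_1 xu)^{1/(2A)}$ varies by $\asymp \delta^{-1}$ across the support $u\in[1-L^{-1},1+L^{-1}]$, so for $\delta$ small this integral is $\asymp L^{-1}(\mu_1 hx)^{1/(2A)\cdot 0}$ times an oscillating factor $\cos((h\mu_1 x)^{1/(2A)}+k\pi+O(\delta))$ up to normalization — the key point being that as $x$ ranges over a short interval of length $\asymp x^{1-1/(2A)}$, the argument $(h\mu_1 x)^{1/(2A)}$ sweeps through a full period, forcing the cosine to attain values near $\pm 1$.

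The main technical work is controlling the tail $\sum_{n\ge 2}$ and showing it does not swamp the $n=1$ term. Here I would follow \cite{HBT94}: rather than bounding the tail pointwise (which fails), average $F(x)$ against a suitable nonnegative kernel supported on the short interval, or test $F$ against $\cos$ and $\sin$ of the relevant frequency, so that the off-diagonal terms $n\ge 2$ get damped by the separation of frequencies $(h\mu_n x)^{1/(2A)}$. Concretely, multiply $S_\varphi(x_\pm)/(\mu_1 hx_\pm)^{\mathrm{i}\xi}$ by a mollifier and integrate in $x$ over $[x-c_0x^{1-1/(2A)}, x+c_0x^{1-1/(2A)}]$; the diagonal $n=1$ contributes $\gg x^{1-\vartheta}L^{-1}\cdot(\text{interval length})$ with a definite sign controlled by $\varsigma=\omega e_0 b_1/|b_1|$, while each $n\ge 2$ contributes $O\big(|b_n|\mu_n^{-\vartheta-1/(2A)}\cdot(\text{something summable})\big)$ by stationary phase / repeated integration by parts, using the absolute convergence from (A1) to sum over $n$. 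Balancing $\delta$ small enough that the diagonal dominates the total tail plus the $O(L^{-1}x^{1-\vartheta-1/(2A)})$ error from \eqref{main} yields the claimed $c_\pm$.

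The hard part will be making the off-diagonal estimate uniform and strong enough. Unlike the classical divisor-problem setting of \cite{HBT94}, here the frequencies $(h\mu_n x)^{1/(2A)}$ depend on the possibly irregular sequence $\{\mu_n\}$, so I cannot invoke orthogonality; instead I must rely on the decay of $\widehat\varphi$ in \eqref{UB_hatvarphi} together with integration by parts in $u$ in the definition of $S_{\varphi,0}$ to gain a factor $(\mu_n x)^{-r/(2A)}$ for each $n\ge 2$, then sum. The subtlety is that $\mu_2$ may be very close to $\mu_1$, so the gain per term is only polynomial in $n$ via the growth $\mu_n\to\infty$; one needs $\sum_{n\ge 2}|b_n|\mu_n^{-\vartheta}\mu_n^{-1/(2A)}$ (or a mild power thereof) to converge, which follows from (A1) after fixing $\sigma_1$ large, but tracking the dependence on $\delta$ in the resulting constants requires care. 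A secondary obstacle is ensuring the mollified diagonal term genuinely changes sign: this comes down to the elementary fact that $\int_{I}\cos(\omega t+\theta)\rho(t)\,\dd t$, for $I$ an interval on which $\omega t$ gains $\gg 1$ and $\rho\ge 0$ a bump, is bounded below in absolute value by a positive constant times $\|\rho\|_1$ for suitable choices of the center — one picks $x_+$ near a maximum and $x_-$ near a minimum of the slowly-varying phase, which is legitimate since that phase is monotone on the short interval.
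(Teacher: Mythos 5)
Your outline correctly identifies the two ingredients (reduction to $S_{\varphi,0}$ via Theorem~\ref{thm1}, then a Heath-Brown--Tsang averaging over the short interval), but the step that actually produces the two signs is not carried out correctly, and as described it would fail. With a plain nonnegative mollifier on the short interval, the claim that ``the diagonal $n=1$ contributes $\gg x^{1-\vartheta}L^{-1}\cdot(\text{interval length})$ with a definite sign'' is false: in the variable $T=x^{1/(2A)}$ the $n=1$ term oscillates with the \emph{fixed} period $2\pi/(h\mu_1)^{1/(2A)}$, so any window wide enough to damp the finitely many terms with moderate $\mu_n$ (whose number and coefficients depend on $\delta$, and whose amplitudes $|b_n|\mu_n^{-\vartheta}L^{-1}$ may exceed the $n=1$ amplitude by an arbitrarily large factor) averages out the $n=1$ term as well. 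Conversely, your fallback in the last paragraph --- a narrow bump centred at an extremum of the slowly varying $n=1$ phase, with the tail controlled pointwise by integration by parts in $u$ --- does not work either: that integration by parts only gains $L^{-1}(\delta^{-1}\mu_n^{-1/(2A)})^{r}$, which is a \emph{loss} for all $n$ with $\mu_n^{1/(2A)}\le\delta^{-1}$, so it only disposes of the far tail $n>N(\delta)$ (exactly its role in the paper, see \eqref{s5.2}); the remaining terms $2\le n\le N$ can pointwise swamp the $n=1$ term, so no choice of $x_\pm$ based on the $n=1$ phase alone can be justified. (Also, $\sum_{n\ge2}|b_n|\mu_n^{-\vartheta-1/(2A)}$ need not converge under (A1): ``fixing $\sigma_1$ large'' does not change the exponent; one needs $r$ with $\vartheta+r/(2A)\ge\sigma^*$, which is what forces the $\delta^{-r}$ loss. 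A further slip: the phase $(h\mu_1xu)^{1/(2A)}$ varies by $O(\delta)$, not $\asymp\delta^{-1}$, across the support of $\varphi$ --- your subsequent conclusion in fact requires the former.)

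The missing device, which is the whole point of \cite{HBT94} and of the paper's Section~\ref{sec_proof}, is a kernel that is simultaneously nonnegative and resonant: $K_{\tau,\rho}(t)=(1-|t|)\{1+\tau\cos(2\rho t+\kappa\pi)\}$ with $\rho=(h\mu_1)^{1/(2A)}\alpha$, tested against $S_{\varphi,0}((T+2\alpha t)^{2A})$ for $t\in[-1,1]$. By Lemma~\ref{lem3} the resonant part keeps the $n=1$ term at full strength with the prescribed sign $\tau=\pm1$, while every $n\ge2$ is damped by $O(\alpha^{-2}\mu_n^{-1/A})$ thanks to the frequency separation $|(\mu_1u)^{1/(2A)}-\mu_n^{1/(2A)}|\gg\mu_n^{1/(2A)}$; taking $\alpha=\alpha(\delta)$ large makes the off-diagonal contribution $\ll\delta L^{-1}$ even though the window is wide. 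Choosing $T=2n\pi/(h\mu_1)^{1/(2A)}$ turns the surviving factor into $\cosh(\eta\pi)\ge1$ (this is also where the complex $k=\kappa+\mathrm{i}\eta$, which your sketch ignores, is handled), and only then does nonnegativity of $K_{\tau,\rho}$ with $\int K_{\tau,\rho}\le2$ convert the large average into a point $x_\tau$ in the short interval for each sign. Without building the resonance into the nonnegative kernel, neither of your two proposed mechanisms yields the lower bound $c_\pm x^{1-\vartheta}L^{-1}$ for both signs.
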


\begin{remark} 
(i) If $\eta\neq 0$, we can get the same result for the imaginary part as well. 
\par
(ii) Obviously the two inequalities imply a sign-change of $\re(\varsigma^{-1} S_\varphi(t)/(\mu_1ht)^{{\rm i} \xi})$ in the short interval $[x-c_0x^{1-1/(2A)}, x+c_0x^{1-1/(2A)}]$. When $0<2A<1$, the interval  shrinks as $x\to \infty$, implying a highly oscillatory $x^{-{\rm i} \xi} S_\varphi(x)$. This is unlikely to happen in many circumstances, see Theorem~\ref{thm2.5}  below.
\par
(iii) 
In case $\phi(s)$ has no pole in $\mathcal{D}$ (see (A3)), $M_\varphi(x)\equiv 0$ and thus the positivity (resp. negativity) of $S_\varphi(t)$ implies $a_n>0$ (resp. $a_n<0$)  for some $|\lambda_n-x|\ll x^{1-1/(2A)}$. 
\end{remark}

The next is a consequence echoing the empty Selberg class for small degree $0<{\rm d} <1$ proved in \cite{CG}. Note ${\rm d}=2A$ here. 

\begin{theorem}\label{thm2.5}
Suppose $0<2A<1$ and  {\rm (A1)}-{\rm (A3)}. If either of the following conditions: 
\begin{enumerate}
\item[{\rm (a)}] $\phi(s)$ is entire,
\item[{\rm (b)}] $\xi=0$ and all poles of $\phi(s)$ are real,
\item[{\rm (c)}] all poles of $\phi(s)$ lie in the half-plane $\re s <\vartheta_0$ 
where $\vartheta_0:= 1-\vartheta+1/2A$,
\end{enumerate}
holds true, then for all large enough $n$, 
we have $\lambda_{n+1}- \lambda_n\ll \lambda_n^{1-1/(2A)}$,
where the implied $\ll$-constant is independent of $n$.
\end{theorem}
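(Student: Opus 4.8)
The plan is to derive Theorem~\ref{thm2.5} from Theorem~\ref{thm2} by a contradiction argument, exploiting the fact that when $0<2A<1$ the sign-change interval $[x-c_0x^{1-1/(2A)},x+c_0x^{1-1/(2A)}]$ shrinks to a point as $x\to\infty$. Suppose, for contradiction, that there is a subsequence of gaps with $\lambda_{n+1}-\lambda_n \gg \lambda_n^{1-1/(2A)}$ with arbitrarily large implied constant; equivalently, pick a sequence $x=x_j\to\infty$ such that the interval $J_x:=[x-2c_0x^{1-1/(2A)},x+2c_0x^{1-1/(2A)}]$ contains \emph{no} point $\lambda_n$. On such an interval $S_\varphi(t)-$ well, more precisely $\sum_n a_n\varphi(\lambda_n/t)$ is constant in the relevant range, so $S_\varphi(t)+M_\varphi(t)$ is ``frozen'': for $t$ ranging over a subinterval of length $\asymp c_0 x^{1-1/(2A)}$ the sum $\sum_n a_n\varphi(\lambda_n/t)$ does not change (since the support condition $\lambda_n/t\in[1-L^{-1},1+L^{-1}]$, i.e. $\lambda_n\in[t-t/L,t+t/L]=[t-\delta t^{1-1/(2A)},t+\delta t^{1-1/(2A)}]$, sweeps only through the empty gap).

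First I would make this precise: choosing $\delta$ (hence $L=\delta^{-1}x^{1/(2A)}$) small enough relative to the large gap constant, arrange that for all $t$ in a subinterval $I_x\subseteq J_x$ of length $\gg x^{1-1/(2A)}$ we have $\sum_{n\ge1}a_n\varphi(\lambda_n/t)\equiv \text{const}$, in fact $\equiv 0$ if we center $I_x$ appropriately inside the gap (all terms vanish since no $\lambda_n$ lies in the moving window). Then $S_\varphi(t)=-M_\varphi(t)$ on $I_x$, where $M_\varphi(t)=\frac1{2\pi\ic}\int_{\cC}\phi(s)\widehat\varphi(s)t^s\,\dd s$ is a finite sum of terms of the shape $t^{\rho}(\log t)^j$ over the poles $\rho$ of $\phi$ in $\cD$ (weighted by $\widehat\varphi$, which is entire and of size $\ll L^{-1}$ on bounded sets). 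Each of the three hypotheses (a), (b), (c) forces $M_\varphi(t)$ to be \emph{too small} on $I_x$ to be consistent with the lower bound from Theorem~\ref{thm2}: under (a), $M_\varphi\equiv 0$; under (c), every pole has $\re\rho<\vartheta_0=1-\vartheta+1/(2A)$, so $M_\varphi(t)\ll L^{-1}t^{\vartheta_0-\epsilon}=o(L^{-1}t^{1-\vartheta})$ once $1/(2A)>1$... wait, $1/(2A)>1$ when $2A<1$, so one must instead compare $\re\rho$ with $1-\vartheta$ directly after accounting for the $L^{-1}$ and show the Theorem~\ref{thm2} bound $c_\pm x^{1-\vartheta}L^{-1}$ dominates; under (b), $\xi=0$ and real poles mean $M_\varphi(t)$ is real and of the form $\sum c_\rho t^{\rho}$ with $\rho\in\R$, and one plays the real/imaginary part or a monotonicity/mean-value argument to contradict the oscillation of $\re(\varsigma^{-1}S_\varphi(t))$ guaranteed by Theorem~\ref{thm2}.

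The mechanism is: Theorem~\ref{thm2}, applied with $x=x_j$ and the above small $\delta$, produces $x_+,x_-\in[x-c_0x^{1-1/(2A)},x+c_0x^{1-1/(2A)}]\subseteq J_x$ (we shrink $c_0$ or enlarge the gap constant so that these lie in $I_x$) with $\pm\re(\varsigma^{-1}S_\varphi(x_\pm)/(\mu_1hx_\pm)^{\ic\xi})>c_\pm x^{1-\vartheta}L^{-1}$. In particular $\re(\varsigma^{-1}S_\varphi(t)/(\mu_1ht)^{\ic\xi})$ genuinely changes sign, and has absolute value $\gg x^{1-\vartheta}L^{-1}$ at two nearby points, forcing $|S_\varphi(t)|\gg x^{1-\vartheta}L^{-1}$ somewhere in $I_x$. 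But on $I_x$, $S_\varphi(t)=-M_\varphi(t)$, and in each of the cases (a)--(c) we show $\sup_{t\in I_x}|M_\varphi(t)|$ is genuinely smaller than $c x^{1-\vartheta}L^{-1}$ for $x$ large — in case (a) trivially, in case (c) because $\re\rho<\vartheta_0$ forces $|M_\varphi(t)|\ll L^{-1}t^{\vartheta_0-}$, hmm, I need the exponent bookkeeping to come out right, so let me instead say: in case (c) the bound is designed precisely so that $t^{\re\rho}L^{-1}\ll t^{\vartheta_0-\epsilon}L^{-1}$ while the Theorem~\ref{thm2} lower bound, after substituting $L=\delta^{-1}x^{1/(2A)}$, reads $\gg x^{1-\vartheta}L^{-1}=\delta x^{1-\vartheta-1/(2A)}$... and $\vartheta_0-\epsilon=1-\vartheta+1/(2A)-\epsilon$ versus $1-\vartheta$: so actually $M_\varphi$ looks \emph{larger}, which means I have the inequality backwards and should instead require (as the statement (c) does) $\re\rho<1-\vartheta+1/(2A)$ and exploit that $S_\varphi(t)=-M_\varphi(t)$ \emph{cannot oscillate} with sign changes at the rate Theorem~\ref{thm2} demands, because a short sum $\sum c_\rho t^\rho\widehat\varphi(\rho)$ with few distinct exponents is quasi-monotone on the tiny interval $I_x$ of length $\ll x^{1-1/(2A)}=o(x)$, so it cannot change sign there — this quasi-monotonicity / ``polynomial-exponential sums don't oscillate on short intervals'' argument is the crux.

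The main obstacle, then, is the analysis of $M_\varphi(t)$ on the short interval $I_x$ under hypothesis (b) (and the general shape of (c)): one must argue that a finite linear combination $\sum_{\rho}\sum_{j}c_{\rho,j}t^{\rho}(\log t)^{j}$ — which is what the residue computation of $M_\varphi$ yields, with $\rho$ the poles of $\phi$ in $\cD$ and $c_{\rho,j}$ involving $\widehat\varphi$ — cannot exhibit a sign change of $\re(\varsigma^{-1}t^{-\ic\xi}M_\varphi(t))$ of amplitude $\gg x^{1-\vartheta}L^{-1}$ within a window of width $\asymp c_0 x^{1-1/(2A)}$. This follows from a derivative estimate: on $I_x$, $\frac{d}{dt}[t^{-\ic\xi}M_\varphi(t)]\ll x^{-1}\sup|M_\varphi|$ times bounded factors, so the total variation of $\re(\varsigma^{-1}t^{-\ic\xi}M_\varphi(t))$ over $I_x$ is $\ll x^{-1}\cdot x^{1-1/(2A)}\cdot\sup_{I_x}|M_\varphi(t)| = x^{-1/(2A)}\sup_{I_x}|M_\varphi|$, and under (a)/(b)/(c) one bounds $\sup_{I_x}|M_\varphi|$ so that this total variation is $o(x^{1-\vartheta}L^{-1})$, contradicting the two-sided bound from Theorem~\ref{thm2}. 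Carefully tracking how the constants $c_0,c_\pm$ from Theorem~\ref{thm2}, the choice of $\delta$, and the pole locations interact — in particular making sure the gap-constant can be taken larger than everything else and that the argument is uniform in $n$ (equivalently in $x=x_j$) — is the delicate bookkeeping that the full proof must handle.
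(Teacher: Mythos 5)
Your overall architecture coincides with the paper's: center $x$ in a putative large gap so that $\sum_n a_n\varphi(\lambda_n/t)=0$ for all $t$ in the sign-change window, hence $S_\varphi(t)=-M_\varphi(t)$ there; invoke Theorem~\ref{thm2} to force $\re\big(\varsigma^{-1}(\mu_1 h t)^{-{\rm i}\xi}S_\varphi(t)\big)$ to exceed $c_\pm x^{1-\vartheta}L^{-1}$ in both directions at two points $x_\pm$ of that window; and then show the residue term $M_\varphi$ (a finite combination of $y^{\upsilon}(\log y)^j$ with coefficients $\ll L^{-1}$, which the paper organizes in Lemma~\ref{lem4}) cannot swing that much over an interval of length $\ll x^{1-1/(2A)}$. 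For (a) this is trivial, and for (c) your variation bound is exactly the paper's additive-error comparison: the change of the relevant quantity across the window is $\ll x^{\upsilon_*-1/(2A)}(\log x)^M L^{-1}=o(x^{1-\vartheta}L^{-1})$ since $\upsilon_*<\vartheta_0$. (Two small repairs there: the gap constant should be enlarged rather than $c_0$, $\delta$ shrunk, since $\delta,c_0,c_\pm$ are outputs of Theorem~\ref{thm2}; and the derivative of $t^{-{\rm i}\xi}M_\varphi(t)$ should be bounded termwise, by $\sum_{\upsilon,j}|c|\,x^{\re\upsilon-1}(\log x)^j L^{-m}$, not by $x^{-1}\sup_{I_x}|M_\varphi|$ --- near an internal cancellation the slope is not controlled by the sup on a short interval. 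The termwise bound is what your case (c) computation actually uses, so no harm is done there.)

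The genuine gap is case (b). Hypothesis (b) imposes no upper bound on the real parts of the poles: a real pole $\upsilon$ with $\upsilon\ge\vartheta_0=1-\vartheta+1/(2A)$ is permitted, and then both $\sup_{I_x}|M_\varphi|\ll x^{\upsilon}(\log x)^M L^{-1}$ and your total-variation bound $x^{\upsilon-1/(2A)}(\log x)^M L^{-1}$ can dominate $x^{1-\vartheta}L^{-1}$, so the comparison "variation $=o(x^{1-\vartheta}L^{-1})$" that you commit to simply fails; (b) cannot be reduced to the mechanism of (c). What (b) actually buys, and how the paper exploits it, is different: with $\xi=0$ and $\Upsilon\subset\R$ the quantity $\re\big(\varsigma^{-1}(\mu_1hy)^{-{\rm i}\xi}M_\varphi(y)\big)$ is a real combination $\sum c'\,y^{\upsilon}(\log y)^j L^{-m}$ (up to an admissible $O(y^{1-\vartheta}L^{-2})$), and moving $y$ across the window multiplies it by $1+O(x^{-1/(2A)})$; hence, whatever its size, it cannot take values of opposite sign each exceeding $\tfrac12 c_1x^{1-\vartheta}L^{-1}$ at $x_+$ and $x_-$. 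In other words, in (b) one compares the variation with the value itself (a relative perturbation preserving the sign), not with the threshold $x^{1-\vartheta}L^{-1}$. Your passing remark about a "monotonicity/mean-value argument" gestures at this, but the proof you actually outline does not deliver it, so case (b) remains unproved in your proposal.
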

\begin{remark} Suppose $F(s)$ is belonged to the Selberg class $\mathcal{S}({\rm d})$ of degree $0<{\rm d}<1$. Then $\lambda_n=n$ for all $n\ge 1$, and the corresponding $\vartheta_0$ is $1/2+3/(2{\rm d})>1$. As  $F(s)$  has at most one pole at $s=1$, Condition (c) of Theorem~\ref{thm2.5} are  fulfilled, implying $1=\lambda_{n+1}-\lambda_n\ll  n^{-(1-{\rm d})/{\rm d}}$ for all large $n$, an absurdity. Hence $\mathcal{S}({\rm d})=\emptyset$ for  $0<{\rm d}<1$, cf. \cite{CG, Mol}.
\end{remark}

\vskip 8mm

\section{Applications}\label{sec_application}

We consider the respective $L$-functions arising from self-contragredient representations for $GL_m(\A_\Q)$, modular forms of half-integral weight and Siegel Hecke eigenforms. The latter two cases will be handled altogether.  

\subsection{Self-contragredient representations for $GL_m(\A_\Q)$}
Let $m\ge 2$ be an integer and 
let $\pi = \otimes \pi_p$ be an irreducible unitary cuspidal representation of $GL_m(\A_\Q)$.
We associate the local parameters $\{\alpha_\pi(p,j)\}_{j=1}^m \subset \C $ and $\{\mu_{\pi} (j)\}_{j=1}^m\subset \C$
respectively  to $\pi_p$ and $\pi_\infty$ by the Langlands correspondence. The automorphic $L$-function $L(s, \pi)$  attached to $\pi$ is entire and expands into 
\begin{equation*}
L(s,\pi)=\sum_{n\ge 1} \lambda_{\pi}(n) n^{-s} 
\end{equation*}
for $\re s>1$, where
$$
\lambda_{\pi}(n)
:= \prod_{p^\nu\| n} \sum_{\nu_1+\cdots+\nu_m = \nu} 
\prod_{1\le j\le m} \alpha_{\pi}(p, j)^{\nu_j}.
$$

Set $\phi(s)=\pi^{-ms/2} q_\pi^{s/2} L(s,\pi)$ and $\psi(s)=\pi^{-ms/2} q_{{\widetilde{\pi}}}^{s/2} L(s,\widetilde{\pi})$,
where $\tilde{\pi}$ is the contragredient of $\pi$ with $\{\overline{\mu_\pi (j)}: \,1\le j\le m\}$ 
as the set of parameters for $\widetilde{\pi}_\infty$ and $q_{\pi}$ is the arithmetic conductor of $\pi$. 
The functional equation in (A2) is satisfied with $\omega=1$, 
$$
\Delta(s) 
= \prod_{1\le j\le m} \Gamma \bigg(\frac{s+\mu_\pi(j)}2\bigg) 
\quad 
\mbox{and} 
\quad 
\widetilde{\Delta}(s)
= \prod_{1\le j\le m} \Gamma\bigg(\frac{s+\overline{\mu_\pi (j)}}2\bigg).
$$ 

It is known that
from  Kim \& Sarnak \cite{KimSarnak2003} ($2\le m\le 4$) 
and  Luo, Rudnick \& Sarnak \cite{LuoRudnickSarnak1999} ($m\ge 5$) that 
\begin{equation}\label{LRS}
|\alpha_\pi(p, j)|\leq p^{\theta_m} 
\qquad\text{and}\qquad
|\re \mu_\pi(j)| \leq \theta_m 
\end{equation}
for all primes $p$ and $1\le j\le m$, where 
\begin{equation*}
\theta_2 := \frac{7}{64},
\qquad
\theta_3 := \frac{5}{14},
\qquad
\theta_4 := \frac{9}{22},
\qquad
\theta_m := \frac{1}{2}-\frac{1}{m^2+1} 
\quad
(m\ge 5).
\end{equation*}
The Generalized Ramanujan Conjecture (GRC) asserts that
the inequalities in \eqref{LRS} hold for all primes $p$ and $1\le j\le m$ with 
\begin{equation*}
\theta_m=0.
\end{equation*}
Plainly \eqref{LRS} implies
\begin{equation*}
|\lambda_{\pi}(n)|\leq n^{\theta_m} d_m(n)
\end{equation*}
for all $n\ge 1$, where $d_m(n) := \sum_{n_1\cdots n_m=n} 1$.

When $\pi$ is self-contragredient, 
we have 
$$
\{\mu_\pi(j): \,1\le j\le m\}=\{\overline{\mu_\pi(j)}: \,1\le j\le m\}.
$$
Thus 
$\widetilde{\Delta}(s)=\Delta(s)$
and $\lambda_{\pi}(n)$ is real for all $n\ge 1$. Further,
$$
A = \tfrac{1}{2}m,
\qquad
B=\widetilde{B}\in \R,
\qquad
\xi=\eta=0,
\qquad
e_0= (2/\pi)^{1/2} (2m)^{m/2}.
$$ 
Let
$$
{\mathscr N}_{\pi}^{\pm}(x)
:= \sum_{\substack{n\le x\\ \lambda_{\pi}(n)\gtrless\,0}} 1.
$$
Liu \& Wu \cite{LiuWu2015} proved that 
\begin{equation}\label{LB_Npmx}
{\mathscr N}_{\pi}^{\pm}(x)
\gg_{\pi} x^{1-2\theta_m} (\log x)^{2/m-2}
\qquad
(x\ge x_0(\pi))
\end{equation}
holds unconditionally for $2\le m\le 4$ and 
holds under Hypothesis H of Rudnick-Sarnak \cite{RudnickSarnak1996} for $m\ge 5$. 
Moreover, this result was applied to evaluate the number of sign changes $\mathcal{N}_{\pi}^*(x)$ of the sequence $\{\lambda_{\pi}(n)\}_{n\ge 1}$  in the interval $[1, x]$. It is shown in \cite[Corollary 1]{LiuWu2015} that
\begin{equation*}
\mathcal{N}_{\pi}^*(x)\gg_{\pi} \log\log x
\qquad
(x\ge x_0(\pi))
\end{equation*}
unconditionally for $2\le m\le 4$ and under Hypothesis H for $m\ge 5$.

Now by Theorem~\ref{thm2} with $A=m/2$ (and Remark 2 (iii)), the sequence $\{\lambda_\pi(n)\}$ has a sign-change 
over the short interval $\mathcal{I}_x:=[x-cx^{1-1/m}, x+cx^{1-1/m}]$, i.e. $\lambda_\pi(n)\lambda_\pi(n')<0$ 
where $n,n'\in \mathcal{I}_x$, for all sufficiently large $x$ and for some absolute constant $c>0$. 
Thus we obtain readily the following.

\begin{theorem}\label{thm3}
Let $m\ge 2$ and let $\pi$ be a self-contragredient irreducible unitary cuspidal representation for $GL_m(\A_\Q)$. 
Then we have
\begin{equation*}
\mathcal{N}_{\pi}^*(x)\gg_{\pi} x^{1/m}
\end{equation*}
for $x\ge x_0(\pi)$.
In particular we have
\begin{equation}\label{LB_Npmx_2}
\mathscr{N}_{\pi}^{\pm}(x)\gg_{\pi} x^{1/m}
\end{equation}
for $x\ge x_0(\pi)$.
\end{theorem}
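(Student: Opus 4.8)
The plan is to deduce Theorem~\ref{thm3} directly from Theorem~\ref{thm2} applied in the self-contragredient $GL_m$ setting, where the parameters simplify dramatically: $A=m/2$, $\xi=\eta=0$, $\vartheta$ is controlled by the archimedean parameters $\mu_\pi(j)$ via \eqref{LRS}, and $M_\varphi\equiv 0$ since $L(s,\pi)$ is entire, so $S_\varphi(x)=\sum_{n}\lambda_\pi(n)\varphi(\lambda_n/x)$ with $\lambda_n=n$. First I would record that Theorem~\ref{thm2}, together with Remark~2(iii), yields for all sufficiently large $x$ a pair of points $x_\pm$ in a short interval $[x-c_0x^{1-1/m},\,x+c_0x^{1-1/m}]$ at which $S_\varphi(x_+)>0$ and $S_\varphi(x_-)<0$; since $S_\varphi$ is a weighted sum of $\lambda_\pi(n)$ over $n$ with $|n-x|\ll x^{1-1/m}$, and a nonnegative weight $\varphi$, positivity of $S_\varphi(x_+)$ forces some $\lambda_\pi(n)>0$ with $n$ in a window of length $\ll x^{1-1/m}$ about $x_+$, and likewise negativity forces some $\lambda_\pi(n')<0$ nearby. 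Hence there is a genuine sign change of the sequence $\{\lambda_\pi(n)\}$ inside an interval $\mathcal{I}_x$ of length $\ll x^{1-1/m}$ centered near $x$.

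Next I would convert this ``one sign change per short interval'' statement into the claimed lower bound $\mathcal{N}_\pi^*(x)\gg_\pi x^{1/m}$ by a disjointness/packing argument. The point is that the interval $\mathcal{I}_x$ has length comparable to $c\,x^{1-1/m}$, so one can choose a sequence of centers $x_1<x_2<\cdots$ in $[x/2,x]$ with consecutive centers spaced by more than the common interval length — say $x_{j+1}=x_j+2c x_j^{1-1/m}$ — so that the associated intervals $\mathcal{I}_{x_j}$ are pairwise disjoint and each contributes at least one sign change. Counting these centers: the gap $\sim x^{1-1/m}$ between successive $x_j$ near size $x$ means the number of such $j$ with $x_j\le x$ is $\gg x/x^{1-1/m}=x^{1/m}$, giving $\mathcal{N}_\pi^*(x)\gg_\pi x^{1/m}$. (One should be slightly careful that successive sign changes located in disjoint intervals are indeed distinct sign changes of the sequence; since the intervals are disjoint and each forces a $+$ value and a $-$ value of $\lambda_\pi$ inside it, the sign pattern must switch at least once strictly inside each, and these switch locations are distinct.) Finally, \eqref{LB_Npmx_2} is immediate: each sign change forces both a positive value and a negative value of $\lambda_\pi(n)$ for some $n\le x$, so $\mathscr{N}_\pi^\pm(x)$ is bounded below by (essentially) the number of sign changes, hence also $\gg_\pi x^{1/m}$.

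The main technical point to verify carefully is that the hypotheses of Theorem~\ref{thm2} are actually met in this application, i.e. that assumptions (A1)--(A3) hold for $\phi(s)=\pi^{-ms/2}q_\pi^{s/2}L(s,\pi)$. Here (A1) follows from absolute convergence of the Dirichlet series for $L(s,\pi)$ in $\re s>1$ together with the bound $|\lambda_\pi(n)|\le n^{\theta_m}d_m(n)$; (A2) is the standard functional equation for $L(s,\pi)$ with the gamma factors $\Delta,\widetilde\Delta$ as displayed, and $\omega=1$; and (A3) follows from the entirety of the completed $L$-function (so $\Delta(s)\phi(s)$ is entire, $R$ can be taken arbitrarily small, and $M_\varphi\equiv 0$) together with the standard fact that a completed automorphic $L$-function decays to zero uniformly in vertical strips, which is a consequence of the Phragmén--Lindelöf principle applied between the region of absolute convergence and its functional-equation reflection. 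Once (A1)--(A3) are in place the rest is the soft packing argument above; I expect no serious obstacle, the only mild subtlety being to keep the implied constants uniform (they depend only on $\varphi_0$, $\delta$ and the parameters in (A1)--(A2), hence only on $\pi$) so that the choice of $\delta$ and of the spacing constant $c$ in the packing can be made once and for all.
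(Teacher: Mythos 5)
Your proposal is correct and follows essentially the same route as the paper: verify (A1)--(A3) for the completed $L(s,\pi)$, apply Theorem~\ref{thm2} with $A=m/2$, $\xi=\eta=0$, $M_\varphi\equiv 0$ and Remark~2(iii) to produce a sign change of $\{\lambda_\pi(n)\}$ in every interval of length $\asymp x^{1-1/m}$, then pack $\gg x^{1/m}$ disjoint such intervals to get both bounds. The only difference is that you spell out the packing/disjointness step and the Phragm\'en--Lindel\"of verification of (A3), which the paper leaves implicit ("we obtain readily"); the minor slip $\lambda_n=n$ versus $\lambda_n=n\pi^{m/2}q_\pi^{-1/2}$ is harmless.
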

\begin{remark} 
(i)
Neither the Rankin-Selberg $L$-function nor the bound towards Ramanujan conjecture were used, 
showing the robustness of the method in \cite{HBT94}.
\par
(ii)
The inequality \eqref{LB_Npmx_2} improves Liu-Wu's \eqref{LB_Npmx} in two directions: 
the former is unconditional for all $m\ge 5$, and the exponent $1/m$ is better than $1-2\theta_m$ for $m\ge 3$.
\end{remark}

\subsection{Modular forms of half-integral weight and Siegel-Hecke eigenforms}
Let $N$ be a positive integer and $\chi$ a Dirichlet character mod $4N$. For any odd integer $k>1$, we define $\mathcal{S}_{k/2}^*(4N, \chi)$ to be the space of holomorphic cusp forms of half-integral weight $k/2$ and nebentypus $\chi$ for the congruence subgroup $\Gamma_0(4N)$ which are orthogonal to $\Theta$ where $\Theta$ is the span of unary theta series. Note that the space $\Theta$ is nonzero only  when $k=3$. 

Let $f\in \mathcal{S}^*_{k/2}(4N,\chi)$ and write $f(z)=\sum_{n\ge 1} \lambda(n) n^{(k-2)/4}e(nz)$. Associated to $f$ is an $L$-function defined as $L(s,f)=\sum_{n\ge 1} \lambda(n)n^{-s}$, which is entire and  satisfies the functional equation
\begin{eqnarray*}
\Phi(s) L(s,f) = \omega \Phi(1-s) L(1-s,g)
\end{eqnarray*}
where $\Phi(s) =\pi^{-s}N^{s/2} \Gamma(s+(k/2-1)/2)$ and  $g\in \mathcal{S}_{k/2}^*(4N, \overline{\chi}\left(\frac{4N}{\cdot}\right))$. The character $\left(\frac{4N}{\cdot}\right)$ is defined as in \cite[p.442]{Shi}. 
We are interested in, for nonzero $f\in \mathcal{S}^*_{k/2}(4N,\chi)$, the sequence $\{\lambda_r(n)\}_{n\ge 1}$ defined by 
\begin{eqnarray}\label{cr}
&& L(s,f)^r =\sum_{n\ge 1} \lambda_r(n)n^{-s}, \qquad \mbox{ where $r\in\N$ is arbitrary.}
\end{eqnarray}

Next we turn to Siegel eigenforms. Let $1\le m \le k$ be integers and $Sp(m,\Z)$ be the symplectic group. Define $\mathcal{M}_k(Sp(m,\Z))$ (resp. $\mathcal{S}_k(Sp(m,\Z))$) to be the space of holomorphic modular (resp. cusp) forms of weight $k$ for $Sp(m,\Z)$.  Hecke eigenforms mean the nonzero common eigenfunctions of the Hecke algebra.  Attached to each Hecke eigenform $F\in \mathcal{M}_k(Sp(m,\Z))$, one defines, cf. \cite{AS},  the {\it standard $L$-function} 
\begin{eqnarray*}
L_1(s,F) := \zeta(s)\prod_p \prod_{1\le j\le m} (1-\alpha_j(p)p^{-s})^{-1}(1-\alpha_j(p)^{-1}p^{-s})^{-1}
\end{eqnarray*}
where $\{\alpha_j(p)\}_{1\le j\le m}$ are the Satake parameters of $F$, and  
the {\it spinor $L$-function}
\begin{eqnarray*}
L_2(s,F) := \prod_p \prod_{0\le k\le m} \prod_{1\le j_1<\cdots <j_k\le m} (1-\alpha_0(p) \alpha_{j_1}(p)\cdots \alpha_{j_k}(p)p^{-s})^{-1}
\end{eqnarray*}
where $\alpha_0(p)^2 \alpha_1(p)\cdots \alpha_m(p)=1$. Here $\re s $ is taken to be sufficiently large.

We know from  \cite{Miz} the following. When $m\equiv 0$ mod $4$, the space $\mathcal{M}_k(Sp(m,\Z))$ may contain theta functions that are associated to symmetric positive definite even integral unimodular $m\times m$ matrices and polynomials satisfying some conditions. These theta functions generate a subspace $\mathcal{B}_k(Sp(m,\Z))$ in $\mathcal{M}_k(Sp(m,\Z))$ which is invariant under the action of Hecke algebra.  Set $\mathcal{B}_k(Sp(m,\Z))=\{0\}$ if no such theta function exists. Define $\mathcal{H}_k^*(Sp(m,\Z))$ to be the set of Hecke eigenforms $F$ in $\mathcal{S}_k(Sp(m,\Z))\setminus \mathcal{B}_k(Sp(m,\Z))$. The Euler product of $L_1(s,F)$ converges absolutely and uniformly for $\re s>m+1$.  Besides, the complete $L$-function 
\begin{eqnarray*}
\Lambda_1(s,F) :=\Gamma_\R(s+\delta_{2\nmid m}) \prod_{1\le j\le m} \Gamma_\C(s+k-j) L_1(s,F)
\end{eqnarray*}
is holomorphic and satisfies the functional equation 
$$
\Lambda_1(s,F)= \Lambda_1(1-s,F).
$$
Here $\delta_*=1$ if the condition $*$ holds, and $0$  otherwise, $\Gamma_\R(s)= \pi^{-s/2} \Gamma(s/2)$ and $\Gamma_\C(s)=2(2\pi)^{-s} \Gamma(s)$. 

For $F\in \mathcal{H}_k^*(Sp(m,\Z))$, we define  the sequence $\{\lambda_{1,F}(n)\}_{n\ge 1}$ by 
\begin{eqnarray}\label{dr}
 && L_1(s,F)= \sum_{n\ge 1} \lambda_{1,F}(n)n^{-s}  \quad \mbox{  and \quad set $r:=m+\frac12$.}
\end{eqnarray}

The case of spinor $L$-functions is less understood, but the recipe in automorphic representation theory would suggest  hypothetically a working ground, cf \cite{Sch}. Let
$$
\Lambda_2(s,F)
= \Gamma_\C(s)^{N/2} \prod_{\substack{
\boldsymbol{\varepsilon}\in \{\pm 1\}^m\\ \alpha(\boldsymbol{\varepsilon}) >0}} 
\Gamma_\C\big(s+\tfrac{1}{2}|k\alpha(\boldsymbol{\varepsilon}) - \beta(\boldsymbol{\varepsilon})|\big) \prod_{\substack{\boldsymbol{\varepsilon} \in \{\pm 1\}^m\\ \alpha(\boldsymbol{\varepsilon})=0,\, \beta(\boldsymbol{\varepsilon})>0}} \Gamma_\C\big(s+\tfrac{1}{2}\beta(\boldsymbol{\varepsilon})\big)
$$
where 
$
N := \#\{\boldsymbol{\varepsilon} = (\varepsilon_1, \dots, \varepsilon_m)\in \{\pm 1\}^m : 
\alpha(\boldsymbol{\varepsilon}) = \beta(\boldsymbol{\varepsilon})=0\}
$ 
with 
$$
\alpha(\boldsymbol{\varepsilon}) 
:= \varepsilon_1 + \varepsilon_2 + \cdots +\varepsilon_{m}
\qquad\text{and}\qquad 
\beta(\boldsymbol{\varepsilon})
:= \varepsilon_1+ 2\varepsilon_2 + \cdots + m\varepsilon_{m}.
$$

We {\it assume} the eigenform $F\in \mathcal{H}_k(Sp(n,\Z))$ satisfies the following hypothesis.
\begin{enumerate}
\item[{\bf (EF)}]  $\Lambda_2(s,F)$ extends to an entire function and satisfies the functional equation 
$$
\Lambda_2(s,F)= \omega \Lambda_2(1-s,F)\qquad (|\omega|=1).
$$
\end{enumerate}

Now let us  introduce  the sequence $\{\lambda_{2,F}(n)\}_{n\ge 1}$ for which
\begin{eqnarray}\label{er}
&& L_2(s,F)= \sum_{n\ge 1} \lambda_{2,F}(n)n^{-s}  \quad \mbox{ and \quad set $r:=2^{m-1}$.}
\end{eqnarray}

Similarly to Theorem~\ref{thm3}, we have the following result.
\begin{theorem}\label{thm4} Let $r$  and $\{a_r(n)\}_{n\ge 1}$ be the value and the sequence defined as in  (\ref{cr}), (\ref{dr}) and (\ref{er}), where the case (\ref{er}) is particularly conditional on Hypothesis {\bf (EF)}.   Suppose $a_r(n)\in \R$ for all $n$, and  $x$ is any sufficiently large number. Then $\{a_r(n)\}$ has a sign-change  as $n$ runs over the short interval $[x-cx^{1-1/(2r)}, x+cx^{1-1/(2r)}]$.  Moreover, the number of sign-changes in $\{a_r(n)\}_{1\le n\le x}$ is $\gg x^{1/(2r)}$, so is the number of the terms in $\{a_r(n)\}_{1\le n\le x}$ of the same sign.
\end{theorem}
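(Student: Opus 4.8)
\medskip

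\noindent\textbf{Proof plan.}
The plan is to realise each of $\{\lambda_r(n)\}$, $\{\lambda_{1,F}(n)\}$, $\{\lambda_{2,F}(n)\}$ as the coefficient sequence $\{a_n\}$ of a Dirichlet series $\phi(s)=\sum_{n\ge1}a_n\lambda_n^{-s}$ with $\lambda_n=cn$ ($c>0$) and $a_n$ a positive constant multiple of $a_r(n)$, satisfying {\rm (A1)}--{\rm (A3)} with $A=r$, so that Theorem~\ref{thm2} applies at the short-interval scale $x^{1-1/(2A)}=x^{1-1/(2r)}$, and then to run the disjoint-intervals counting argument used just before Theorem~\ref{thm3}. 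Concretely: for $L(s,f)^r$ take $\phi(s)=L(s,f)^r$ with $(\pi^{-1}N^{1/2})^{rs}$ absorbed into $\lambda_n$, so that the $r$-th power of $\Phi(s)L(s,f)=\omega\Phi(1-s)L(1-s,g)$ is the functional equation {\rm (A2)} with $d=r$, all $\alpha_\nu=1$, $\Delta(s)=\widetilde\Delta(s)=\Gamma\big(s+(k/2-1)/2\big)^r$, and $A=r$; for the standard $L$-function $\phi(s)=L_1(s,F)$ (normalised by the $\pi$- and $2\pi$-powers in $\Gamma_\R,\Gamma_\C$) has $d=m+1$ with one $\alpha_\nu=\tfrac12$ and $m$ equal to $1$, so $A=m+\tfrac12=r$ and $\omega=1$; for the spinor $L$-function $\phi(s)=L_2(s,F)$ has $\Delta(s)$ a product of $2^{m-1}$ factors $\Gamma_\C(s+\cdot)$, all $\alpha_\nu=1$, so $A=2^{m-1}=r$. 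In each case $\widetilde\Delta=\Delta$, hence $B=\widetilde B\in\R$, $a=\tfrac1{4A}-\tfrac12\in\R$, $k\in\R$, so $\xi=\eta=0$.

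The substance is checking {\rm (A1)}--{\rm (A3)} and then reading off the sign change. {\rm (A1)} is immediate. For {\rm (A3)} I would invoke: that $L(s,f)$ is entire with $\Phi(\sigma+\ic t)L(\sigma+\ic t,f)\to0$ uniformly in vertical strips (Shimura's theory of $L$-functions of half-integral weight forms), hence the same for $L(s,f)^r$ and $\Delta(s)$, so $\phi$ is entire; that the completed standard $L$-function $\Lambda_1(s,F)$ is holomorphic on $\mathcal H_k^*(Sp(m,\Z))$ — precisely why the theta-type eigenforms $\mathcal B_k$ are excised — with uniform vertical decay from a convexity bound plus Stirling; and that in the spinor case Hypothesis~\textbf{(EF)} supplies exactly the entirety of $\Lambda_2(s,F)$ and $|\omega|=1$. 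Granted this, $\phi$ has no singularity in $\C$, so $M_\varphi(x)$ in \eqref{M} vanishes and $S_\varphi(x)=\sum_{n\ge1}a_n\varphi(\lambda_n/x)$. Since $a_r(n)\in\R$, one has $\xi=\eta=0$ and, arguing as in the self-contragredient $GL_m$ case, $\varsigma\in\R\setminus\{0\}$ and $S_\varphi(x)\in\R$. With $\xi=0$, Theorem~\ref{thm2} then gives $x_\pm\in[x-c_0x^{1-1/(2r)},x+c_0x^{1-1/(2r)}]$ with $\pm\varsigma^{-1}S_\varphi(x_\pm)>c_\pm x^{1-\vartheta}L^{-1}>0$; as $\varphi\ge0$ is supported where $|\lambda_n-x_\pm|\ll x^{1-1/(2r)}$, positivity of $\varsigma^{-1}S_\varphi(x_+)$ forces some $a_r(n)$ of sign $\mathrm{sgn}\,\varsigma$ and negativity of $\varsigma^{-1}S_\varphi(x_-)$ some $a_r(n')$ of opposite sign, with $n,n'\in[x-cx^{1-1/(2r)},x+cx^{1-1/(2r)}]$ — the claimed short-interval sign change.

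For the counting, fix $[\alpha x,\beta x]\subset[1,x]$ and cover it by consecutive, pairwise disjoint intervals $J_j=[y_j-cy_j^{1-1/(2r)},y_j+cy_j^{1-1/(2r)}]$ with $y_{j+1}-y_j\asymp y_j^{1-1/(2r)}$; their number is $\asymp\int_{\alpha x}^{\beta x}t^{1/(2r)-1}\d t\asymp x^{1/(2r)}$. By the previous step each $J_j$ contains both a positive and a negative term of $\{a_r(n)\}$, so $\{a_r(n)\}_{n\le x}$ has $\gg x^{1/(2r)}$ terms of each sign and — each disjoint $J_j$ already forcing a sign change — at least $\gg x^{1/(2r)}$ sign changes. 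This yields Theorem~\ref{thm4}. The main obstacle is {\rm (A3)} for the concrete $L$-functions: analytic continuation together with uniform Phragm\'en--Lindel\"of-type decay in vertical strips — classical but requiring careful citation for $L(s,f)^r$ and $L_1(s,F)$ (notably the exclusion of $\mathcal B_k$ to keep $\Lambda_1$ holomorphic), and simply postulated as \textbf{(EF)} for $L_2(s,F)$; a minor nuisance, the requirement $a_1\neq0$ in {\rm (A1)}, is handled by rescaling $f$ (resp.\ $F$), or an index shift if the leading coefficient vanishes, without affecting the conclusion.
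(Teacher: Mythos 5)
Your proposal matches the paper's (largely implicit) proof of Theorem~\ref{thm4}: realize each completed $L$-function — $\Phi(s)^rL(s,f)^r$, $\Lambda_1(s,F)$ (with $\mathcal{B}_k$ excised), and $\Lambda_2(s,F)$ under {\bf (EF)} — as $\phi$ satisfying {\rm (A1)}--{\rm (A3)} with $A=r$ and $\xi=\eta=0$, apply Theorem~\ref{thm2} together with Remark 2(iii) (here $M_\varphi\equiv 0$ since $\phi$ is entire) to get a sign change in each interval of length $\asymp x^{1-1/(2r)}$, and count $\asymp x^{1/(2r)}$ disjoint such intervals, exactly as in the deduction of Theorem~\ref{thm3}. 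The only quibbles are cosmetic: reality of $\varsigma$ is not actually needed (reality of $S_\varphi$ plus the two inequalities of Theorem~\ref{thm2} already force opposite signs), and a vanishing leading coefficient is handled by re-indexing the sequences, not by rescaling $f$ or $F$.
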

\begin{remark} This extends the study in \cite{RSW} on the spinor $L$-functions of Siegel eigenforms of genus 2, i.e. $m=2$. 
\end{remark}
\vskip 8mm

\section{Preparation}\label{sec_preparation}

In this section, we establish three preliminary lemmas for Theorems \ref{thm1} and \ref{thm2}.

\begin{lemma}\label{lem1} 
Let $J\ge 1$ be an integer. We have
\begin{equation}\label{D}
\frac{\widetilde{\Delta}(s)}{\Delta(1-s)} 
= \sum_{0\le j\le J-1} e_j F_j(s) + F_0(s) \cdot O_J\big(|s|^{-J}\big)
\end{equation}
as $|s|\to \infty$, where $e_j\in\C$ are constants with $e_0$ given by \eqref{e0} and
$$
F_j (s) 
:= h^{-s} \Gamma\big(2A(s+a)-j\big) \cos\big(\pi A(s+a)+k\pi\big). 
$$
The implied $O$-constant depends only on the parameters in {\rm (A2)} and $J$. 
 \end{lemma}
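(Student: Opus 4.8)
The plan is to start from the exact expression for the ratio $\widetilde\Delta(s)/\Delta(1-s)$ as a product of gamma quotients and convert each quotient into a single gamma function times a trigonometric factor via the reflection formula, then extract an asymptotic expansion in $|s|$ using Stirling's formula.

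First I would write, using (A2),
$$
\frac{\widetilde\Delta(s)}{\Delta(1-s)}
= \prod_{1\le\nu\le d}\frac{\Gamma(\alpha_\nu s+\widetilde\beta_\nu)}{\Gamma(\alpha_\nu(1-s)+\beta_\nu)}.
$$
To each factor apply the reflection formula $\Gamma(z)\Gamma(1-z)=\pi/\sin(\pi z)$ with $z=\alpha_\nu(1-s)+\beta_\nu$, turning $\Gamma(\alpha_\nu(1-s)+\beta_\nu)^{-1}$ into $\pi^{-1}\sin(\pi(\alpha_\nu(1-s)+\beta_\nu))\,\Gamma(\alpha_\nu s+1-\alpha_\nu-\beta_\nu)$. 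This rewrites the ratio as a product of $\Gamma(\alpha_\nu s+\widetilde\beta_\nu)\Gamma(\alpha_\nu s+1-\alpha_\nu-\beta_\nu)$ against a product of sines. Next I would collapse the products of gamma functions by the Legendre–Gauss multiplication formula: a product $\prod_\nu\Gamma(\alpha_\nu s+c_\nu)$ with $\sum\alpha_\nu=A$ is, asymptotically, $c\,(2\pi)^{(d-1)/2}A^{-As-\cdots}\prod\alpha_\nu^{\alpha_\nu s+c_\nu-1/2}\,\Gamma(As+\sum c_\nu)(1+O(1/|s|))$ — more precisely I would use Stirling directly on $\log$ of each side and match. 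Doing this for the $2d$ gamma factors produces a single $\Gamma(2A(s+a))$ up to the constant $h^{-s}$ (this is exactly where $h$ and $a$, as defined via $A,B,\widetilde B$, are forced), and the product of sines combines by standard sum-to-product identities into $\cos(\pi A(s+a)+k\pi)$ up to an overall nonzero constant, which will be $e_0$; here $k$ emerges from the accumulated phase $\tfrac d2-\tfrac14-\tfrac{A+B+\widetilde B}{2}$.

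To get the full expansion \eqref{D} with $J$ terms, I would retain more terms in Stirling's series: $\Gamma(2A(s+a))$ times a full asymptotic series in powers of $1/(2A(s+a))$, and absorb the shifts by the identity $\Gamma(w)=\Gamma(w-j)\cdot w(w-1)\cdots(w-j+1)$, so that the $j$-th term is naturally $\Gamma(2A(s+a)-j)$ times a polynomial-in-$1/s$ correction of degree controlled by $j$; regrouping gives $\sum_{0\le j\le J-1}e_jF_j(s)$ with the stated $F_j$, and the tail is $F_0(s)\cdot O_J(|s|^{-J})$ since all the $F_j$ differ from $F_0$ only by bounded ratios in the relevant region. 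Throughout I would keep $s$ in a region where all the gamma arguments stay away from their poles (the lemma is an asymptotic as $|s|\to\infty$, and in the application $s$ runs on the fixed vertical line $\re s=1-\sigma_2$, so uniformity in bounded vertical strips is automatic from Stirling).

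The main obstacle I expect is bookkeeping the constants and the phase: verifying that the accumulated exponential factors from the multiplication formula really reconstitute $h^{-s}$ with $h=\prod(2A/\alpha_\nu)^{2\alpha_\nu}$, that the leading constant is exactly $e_0=\sqrt{2/\pi}\prod(2A/\alpha_\nu)^{\alpha_\nu+\beta_\nu-\widetilde\beta_\nu}$, and that the sine product collapses to a pure cosine with the shift $k\pi$ rather than a sum of a sine and a cosine (this requires the imaginary parts to line up, i.e.\ that $a$ and $k$ have been defined with precisely the right real and imaginary parts). The gamma and Stirling manipulations are routine; matching the closed forms for $h$, $a$, $k$, $e_0$ against the raw output is the delicate part, and I would do it by taking logarithms and comparing coefficients of $s$, $\log s$, and the constant term separately.
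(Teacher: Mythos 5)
Your overall route is viable and genuinely different from the paper's. You apply the reflection formula to each factor $\Gamma(\alpha_\nu(1-s)+\beta_\nu)$, collapse the resulting $2d$ gamma factors with argument $\asymp\alpha_\nu s$ by a Stirling/multiplication-formula argument into $h^{-s}\Gamma(2A(s+a))$, and treat the trigonometric factors separately. The paper never produces a product of sines: it expands $\log\big(\widetilde{\Delta}(s)/\Delta(1-s)\big)$ termwise as in Chandrasekharan--Narasimhan (with both $\log s$ and $\log(-s)$ present) and compares it directly with $\log\big(\Gamma(2A(s+a))\cos(\pi A(s+a)+k\pi)\big)$, the cosine being rewritten as $\pi/\big(\Gamma(\tfrac12-A(s+a)-k)\Gamma(\tfrac12+A(s+a)+k)\big)$; the difference of the two expansions is $f-f'-s\log h$ plus a power series in $1/s$, and exponentiating and regrouping via $\Gamma(w)/\prod_{\ell=1}^{j}(w-\ell)=\Gamma(w-j)$ gives \eqref{D}. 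Your endgame (absorbing the $1/s$-series into $\Gamma(2A(s+a)-j)$) coincides with the paper's; the front end does not, and your version makes the provenance of $h$, $a$ and $e_0$ quite transparent.

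There is, however, one step whose stated justification fails: for $d\ge 2$ the product $\prod_{\nu}\sin\big(\pi(\alpha_\nu(1-s)+\beta_\nu)\big)$ is \emph{not} a constant multiple of a single cosine, and no sum-to-product identity makes it one; expanding it gives $2^{d-1}$ cosines with the distinct frequencies $\pi(\varepsilon_1\alpha_1+\cdots+\varepsilon_d\alpha_d)s$, $\varepsilon_\nu=\pm1$. What rescues the step is that the expansion is only claimed (and only holds) in regions where the imaginary part of $s$ tends to infinity: there each sine is dominated by a single exponential up to a relative error $O(e^{-c|\Im s|})$, so only the two extreme sign patterns survive, and these recombine exactly into a constant times $\cos(\pi A(s+a)+k\pi)$ (this is where the definition of $k$, with its $\tfrac{d}{2}-\tfrac14$ term, is forced), the discarded terms being exponentially small relative to $F_0(s)$ and hence absorbable into $F_0(s)\,O_J(|s|^{-J})$. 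You must make this dominant-exponential argument explicit, and note that it restricts the expansion to vertical strips, i.e. $|\arg(\pm s)|\le\pi-\varepsilon$ --- the same implicit restriction as in the paper's use of $\log(-s)$ in Stirling, and sufficient for the application on $\Re s=1-\sigma_2$. With that repair, plus the coefficient matching you already flag (which indeed pins down $h$, $a$, $k$, $e_0$ as defined), your proof goes through.
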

\begin{proof} This follows from the Stirling formula for $\log \Gamma(s)$: for any constant $c\in \C$ and any $J\in \N$, 
\begin{eqnarray*}
\log \Gamma(s+c)= (s+c-\tfrac12)\log s - s +\tfrac12 \log 2\pi +\sum_{1\le j\le J-1} c_j s^{-j} +O\big(|s|^{-J}\big)
\end{eqnarray*}
as $|s|\to \infty$, uniformly for where $|{\rm arg}\, s|\le \pi -\varepsilon< \pi$, where the constants $c_j$ depend on $c$. 
Here the empty sum means 0 and the empty product means 1.

As in the proof of \cite[Lemma 1]{CN1963}, we obtain, for $\alpha>0$, $\beta$ and $\widetilde{\beta}\in \C$, 
\begin{align*}
\log \frac{\Gamma(\alpha s+\widetilde{\beta})}{\Gamma(\alpha(1-s)+\beta)}
& = \big(\alpha s+\widetilde{\beta}-\tfrac12\big)\log s - \big(\alpha (1-s)+\beta -\tfrac12\big)\log (-s) 
\\
& \hskip -8mm
+ 2(\alpha\log\alpha-\alpha)s  +(\widetilde{\beta}-\beta-\alpha)\log \alpha+ \sum_{1\le j\le J-1} c_j' s^{-j} +O\big(|s|^{-J}\big)
\end{align*}
for some constants $c_j'$ depending on $\alpha$, $\beta$ and $\widetilde{\beta}$. 

Consequently, 
\begin{equation}\label{logD}
\begin{aligned}
\log\frac{\widetilde{\Delta}(s)}{\Delta(1-s)}
& = \big(A s+\widetilde{B}-\tfrac{1}{2}d\big)\log s - \big(A (1-s)+B -\tfrac{1}{2}d\big)\log (-s) 
\\
& \quad
+ 2\Big(\sum_{1\le \nu\le d} \alpha_\nu\log \alpha_\nu-A\Big)s + f + \sum_{1\le j\le J-1} {d_j^{(1)}} s^{-j} + O\big(|s|^{-J}\big) 
\end{aligned}
\end{equation}
where and throughout this proof, ${d_j^{(1)}}$, ${d_j^{(2)}}$, \dots \, denote some constants, and 
$$
f := \sum_{1\le \nu\le d}(\widetilde{\beta}_\nu-\beta_\nu-\alpha_\nu)\log \alpha_\nu. 
$$

On the other hand, we have
\begin{eqnarray*}
\Gamma(2A(s+a))\cos (\pi A(s+a)+k\pi) = \frac{\pi \Gamma(2A(s+a))} {\Gamma(\frac12- A(s+a)-k)\Gamma(\frac12+A(s+a)+k)}
\end{eqnarray*}
and thus
\begin{equation}\label{logG}
\begin{aligned}
& \hskip -3mm
\log\big(\Gamma(2A(s+a))\cos (\pi A(s+a)+k\pi)\big) 
\\
& = \big(A (s+a)-k-\tfrac12\big)\log s +\big(A (s+a)+k\big)\log (-s) 
\\
& \quad
+ 2\big(A\log (2A)-A\big)s  +f'+ \sum_{1\le j\le J-1} {d_j^{(2)}} s^{-j} +O\big(|s|^{-J}\big), 
\end{aligned}
\end{equation}
where 
$$
f' := \big(2Aa-\tfrac12\big)\log(2A) +\tfrac12\log \tfrac{\pi}2= (\widetilde{B}-B-A)\log (2A) +\tfrac12\log\tfrac{\pi}2.
$$
In view of the values of $a$, $h$ and $k$, the difference  between (\ref{logD}) and (\ref{logG}) equals
$$
f-f' -s\log h + \sum_{1\le j\le J-1} {d_j^{(3)}} s^{-j} +O\big(|s|^{-J}\big).
$$
Clearly for some constants $d_j^{(4)}$, we can write
\begin{align*}
& \exp\Big(f-f' -s\log h + \sum_{1\le j\le J-1} d_j^{(3)} s^{-j} +O\big(|s|^{-J}\big)\Big)
\\
& \hskip 3mm
= e_0 h^{-s} \Big(1 + \sum_{1\le j\le J-1} d_j^{(4)} s^{-j} +O\big(|s|^{-J}\big)\Big)
\end{align*}
with $e_0=\text{e}^{f-f'}$ (as given by \eqref{e0}) and the empty sum means 0.

Besides, for each $1\le j\le J-1$, we have
\begin{eqnarray*}
\frac{1}{s^j} -\frac{(2A)^j}{\prod_{\ell=1}^j (2A(s+a)-\ell)} 
= \sum_{j+1\le j'\le J-1} d_{j'}^{(5)} s^{-j} +O\big(|s|^{-J}\big).
\end{eqnarray*}
Consequently, a successive application of this formula gives
\begin{align*}
\sum_{1\le j\le J-1} d_j^{(4)} s^{-j}
& = \frac{d_1^{(5)}}{(2A(s+a)-1)} + \sum_{2\le j\le J-1} d_j^{(5)} s^{-j} + O\big(|s|^{-J}\big)
\\
& = \sum_{1\le j\le 2} \frac{d_j^{(6)}}{\prod_{\ell=1}^j (2A(s+a)-\ell)} + \sum_{3\le j\le J-1} d_j^{(6)} s^{-j} + O\big(|s|^{-J}\big)
\\\noalign{\vskip 1mm}
& \, \cdots
\\
& = \sum_{1\le j\le J-1} \frac{d_j^{(J+3)}}{\prod_{\ell=1}^j (2A(s+a)-\ell)} + O\big(|s|^{-J}\big).
\end{align*}
For the difference between (\ref{logD}) and (\ref{logG}), the left-hand side is
$$
\log  \frac{\widetilde{\Delta}(s)}{\Delta( 1-s)} -\log \big(\Gamma(2A(s+a))\cos (\pi A(s+a)+k\pi)\big),
$$
and the right-hand side is
$$
\log \Big(h^{-s} \sum_{0\le j\le J-1} \frac{e_j}{\prod_{\ell=1}^{j}(2A(s+a)-\ell)} +h^{-s}O\big(|s|^{-J}\big)\Big).
$$
Thus,
$$
\frac{\widetilde{\Delta}(s)}{\Delta( 1-s)}
= \frac{\cos(\pi A(s+a)+k\pi)}{h^s} \sum_{0\le j\le J-1} \frac{e_j \Gamma(2A(s+a))}{\prod_{\ell=1}^{j}(2A(s+a)-\ell)} 
+ F_0(s)O\big(|s|^{-J}\big).
$$
Using the formula $\Gamma(s+1)=s\Gamma(s)$, it is easy to see that
\begin{align*}
\frac{\Gamma(2A(s+a))}{\prod_{\ell=1}^{j}(2A(s+a)-\ell)} 
& = \frac{\Gamma(2A(s+a)-1)}{\prod_{\ell=2}^{j}(2A(s+a)-\ell)} 
= \cdots 
= \Gamma(2A(s+a)-j).
\end{align*}
Inserting  into the preceding formula, we obtain the required \eqref{D}.
\end{proof}

\begin{lemma}\label{lem2} 
Let $\varepsilon>0$ be arbitrarily small. There exists $J'\in\N$ such that for all $J\ge J'$, 
\begin{align*}
I(y) 
& = \frac{y}{2A}\sum_{0\le j\le J-1} e_j  
\int_0^\infty (hyu)^{a-j/(2A)} \varphi(u)\cos\big((hyu)^{1/(2A)} + (k+\tfrac{1}{2}j)\pi\big)\d u 
\\
& \quad
+ O_{\varepsilon, J}\big(L^{-1} y^{1- \vartheta -(J-\frac12)/(2A)+\varepsilon}\big),
\end{align*}
where the implied $O$-constant depends only on $\varepsilon$, $J$, $\varphi_0$ and  the parameters in {\rm (A1)} and {\rm (A2)}. 
\end{lemma}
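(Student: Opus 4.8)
The plan is to insert the Stirling-type expansion of Lemma~\ref{lem1} into the contour integral defining $I(y)$ and then treat the finitely many ``main'' terms and the remainder separately. Writing $R_J(s):=\widetilde\Delta(s)/\Delta(1-s)-\sum_{0\le j<J}e_jF_j(s)$, so that $R_J(s)=F_0(s)\cdot O_J(|s|^{-J})$ by Lemma~\ref{lem1}, and noting that every integral below converges absolutely on $\re s=1-\sigma_2$ because $\widehat\varphi(1-s)$ decays faster than any power of $|s|$ by \eqref{UB_hatvarphi} while $F_j(s)$ and $R_J(s)$ grow only polynomially, we split
\[
I(y)=\sum_{0\le j<J}e_j\,I_j(y)+\frac1{2\pi\ic}\int_{(1-\sigma_2)}R_J(s)\,\widehat\varphi(1-s)\,y^{1-s}\,\d s,
\qquad
I_j(y):=\frac1{2\pi\ic}\int_{(1-\sigma_2)}F_j(s)\,\widehat\varphi(1-s)\,y^{1-s}\,\d s.
\]

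For each main term I would first move the contour in $I_j(y)$ to a vertical line $\re s=c_j$ with $2A(c_j-\vartheta)-j\in(0,1)$, recording the finitely many residues of $F_j(s)\widehat\varphi(1-s)y^{1-s}$ crossed (these sit only at poles of $\Gamma(2A(s+a)-j)$, all on $\re s\le\vartheta+j/(2A)$). On the line $\re s=c_j$ the rapid decay of $\widehat\varphi(1-s)$ makes the integral absolutely convergent, and I would then interchange it with the $u$-integration in $\widehat\varphi(1-s)=\int_0^\infty\varphi(u)u^{-s}\,\d u$ and evaluate the inner $s$-integral through the substitution $w=2A(s+a)-j$ and the classical Mellin pair $\int_0^\infty\cos(x+\psi)x^{w-1}\,\d x=\Gamma(w)\cos(\tfrac{\pi w}2+\psi)$ on $0<\re w<1$ (valid for complex $\psi$). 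This identifies $\frac1{2\pi\ic}\int_{(c_j)}\Gamma(2A(s+a)-j)\cos(\pi A(s+a)+k\pi)z^{-s}\,\d s$ with $\frac1{2A}z^{a-j/(2A)}\cos\!\big(z^{1/(2A)}+(k+\tfrac12 j)\pi\big)$ for $z=hyu$, so that summing over $j$ yields exactly the asserted main series together with a finite sum $\mathcal R(y)$ of the recorded residues.

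For the remainder I would shift the contour $\re s=1-\sigma_2$ to the right to $\re s=\sigma'':=\vartheta+(J-\tfrac12)/(2A)-\varepsilon$; we may assume $\varepsilon<1/(4A)$. The role of $J'$ is to guarantee, for $J\ge J'$, that $\sigma''$ lies to the right of $1-\sigma_2$ and to the right of every pole of $R_J(s)$ — all poles of $\widetilde\Delta(s)/\Delta(1-s)$ lie in a fixed left half-plane, and those of $\sum_{0\le j<J}e_jF_j(s)$ satisfy $\re s\le\vartheta+(J-1)/(2A)<\sigma''$. Thus the only poles crossed in this shift are those of $-\sum_{0\le j<J}e_jF_j(s)$, and their residues cancel $\mathcal R(y)$ term by term; this cancellation is essential, since it is what lets the remainder be as small as claimed. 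On the new line, Stirling's formula together with the cancellation of the exponential factors of $\Gamma$ and of $\cos$ (exactly as in the proof of Lemma~\ref{lem1}) gives $|R_J(s)|\ll_J|F_0(s)||s|^{-J}\ll_{\varepsilon,J}(1+|s|)^{2A(\sigma''-\vartheta)-1/2-J}=(1+|s|)^{-1-2A\varepsilon}$, while $\widehat\varphi(1-s)\ll L^{-1}$ and $\widehat\varphi(1-s)\ll_r L^{r-1}(1+|s|)^{-r}$; splitting the $\im s$-integral at $|\im s|\asymp L$ then gives $\int_{(\sigma'')}|R_J(s)|\,|\widehat\varphi(1-s)|\,|\d s|\ll_{\varepsilon,J}L^{-1}$. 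Hence the remainder is $\ll_{\varepsilon,J}L^{-1}y^{1-\sigma''}=L^{-1}y^{1-\vartheta-(J-1/2)/(2A)+\varepsilon}$, which is the claimed error.

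The two points I expect to be delicate are both bookkeeping. First, one must check that the residues accumulated when the $I_j$-contours are pushed into $0<\re w<1$ are matched, with the correct signs, by the residues picked up when estimating the remainder — this is why the contour shifts and the value of $J'$ have to be coordinated. Second, the interchange of the $s$- and $u$-integrations is not covered by Fubini on the relevant lines, since the one-dimensional Mellin--Barnes integrals there are only conditionally convergent; it has to be justified by truncating to $|\im s|\le T$, interchanging there, and letting $T\to\infty$, using the rapid decay of $F_j(s)\widehat\varphi(1-s)$ together with a uniform bound on the truncated inner integral. The remaining estimates — Stirling for $F_0$, the residue computations, the vanishing of the horizontal segments in the shifts, and the splitting of the final $\im s$-integral — are routine.
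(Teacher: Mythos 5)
Your argument is correct in substance and uses the same ingredients as the paper's proof --- the expansion of Lemma~\ref{lem1}, contour shifts driven by the rapid decay \eqref{UB_hatvarphi} of $\widehat{\varphi}$, the substitution $w=2A(s+a)-j$, and the Gamma--cosine Mellin pair --- but it organizes the contour work differently. The paper first moves the whole integral \eqref{I} from $\re s=1-\sigma_2$ to the single far-right line $\re s=\theta_J(\varepsilon):=\vartheta-\varepsilon+(J-\frac12)/(2A)$, crossing no poles because $\widetilde{\Delta}(s)/\Delta(1-s)$ and $\widehat{\varphi}(1-s)$ are holomorphic to the right of $\re s=1-\sigma_2$, and only then inserts the expansion; on that line every $\Gamma(2A(s+a)-j)$ has argument of positive real part, so after the substitution the contour goes to $\re z=\frac14$ without meeting any pole, and no residues ever appear. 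Your order --- expand on the original line, then shift each $F_j$-piece and the remainder separately --- creates the residue bookkeeping you describe; the cancellation you invoke is indeed exact, for precisely the reason you give (every pole of $F_j$ with $j\le J-1$ has real part $\le\vartheta+j/(2A)<\min(c_j,\sigma'')$), so your route works at the cost of this extra care, which the paper's ordering avoids entirely. Two corrections are needed, both harmless: (i) the identity $\frac1{2\pi\ic}\int_{(c)}\Gamma(w)\cos\big(\frac{\pi w}{2}+\psi\big)z^{-w}\d w=\cos(z+\psi)$ requires $0<c<\frac12$ (for $c>\frac12$ the modulus of the integrand grows like $|t|^{c-1/2}$ and the improper integral diverges), so you must take $2A(c_j-\vartheta)-j\in(0,\frac12)$ rather than $(0,1)$ --- the paper uses $\re z=\frac14$; (ii) an individual $F_j$ can have a pole exactly on $\re s=1-\sigma_2$ (at $s=\vartheta-\ic\xi+(j-n)/(2A)$), so nudge the common line slightly before splitting, which is legitimate since the full integrand is holomorphic and decays in that region. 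Finally, your splitting of the $\im s$-integration at $|\im s|\asymp L$ is what actually produces the factor $L^{-1}$ in the error term; the paper's own one-line treatment of the remainder (quoting \eqref{UB_hatvarphi} with $r=1$) only yields $O_\varepsilon\big(y^{1-\theta_J(\varepsilon)}\big)$, so on this point your version matches the stated bound more faithfully.
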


\begin{proof} 
Let $0<\varepsilon<1/(4A)$ and write $\theta_J(\varepsilon) := \vartheta- \varepsilon+(J-\frac12)/(2A)$. Choose $J'\in \N$ such that $\theta_{J'}(\varepsilon)>1-\sigma_2$ and consider $J\ge J'$. 
As $\widehat{\varphi}(s)$ decays rapidly (thanks to \eqref{UB_hatvarphi}), we can move the line of integration of $I(y)$ 
in \eqref{I} from $\re s=1-\sigma_2$ to the right line $ \re s= \theta_J(\varepsilon)$. 
Then we insert (\ref{D}) into (\ref{I}) and integrate term by term.  

Let $s=\sigma +\text{i}t$. 
We  have $F_0(s) \ll |t|^{2A(\sigma - \vartheta) -1/2}$, as $|t|\to \infty$,  in any vertical strip  $b<\sigma< b'$. 
By \eqref{UB_hatvarphi} with $r=1$, the rightmost term $F_0(s) O_J(|s|^{-J})$ of \eqref{D} contributes at most $O_{\varepsilon}(y^{1- \theta_J(\varepsilon)})$ to the integral $I(y)$.  

Next we evaluate the integral of every summand in the sum $\sum_{j=0}^{J-1}$,  
which can be expressed as $e_j y I_j$ where
$$
I_j := \frac1{2\pi \text{i}} \int_{(\theta_J(\varepsilon))} \Gamma(2A(s+a)-j)\cos (\pi A(s+a)+k\pi)  \widehat{\varphi}(1-s)(hy)^{-s}\d s.
$$
Note that $\re (2A(s+a)-j)>0$ on $\re s= \theta_J(\varepsilon)$ for $1\le j\le J-1$. 
After a change of variable $z=2A(s+a)-j$ and moving the line of integration to $\re z= \frac14$, we derive that 
$$
I_j = \frac{1}{2A} \cdot \frac1{2\pi \text{i}} 
\int_{(\frac14)} \Gamma(z)\cos\big(\tfrac{1}{2}\pi z + (k+\tfrac{1}{2}j)\pi\big)  
\widehat{\varphi}\Big(1+a -\frac{z+j}{2A}\Big) (hy)^{a-(z+j)/(2A)} \d z.
$$
In virtue of the formula: for $0<c<\frac12$ and $\alpha\in\C$, 
\begin{eqnarray*}
\frac1{2\pi \text{i}} \int_{(c)} \Gamma(s) \cos\big(\tfrac{1}{2}\pi s+\alpha\big) y^{-s} \d s = \cos (y+\alpha),
\end{eqnarray*}
we replace $\widehat{\varphi}$ with its inverse Mellin transform and interchange the integrals, and consequently obtain
$$
I_j =\frac1{2A} \int_0^\infty (hyu)^{a-j/(2A)} \varphi(u)\cos\big((hyu)^{1/(2A)} + (k+\tfrac{1}{2}j)\pi\big) \d u.
$$
This completes the proof.
\end{proof}

\begin{lemma}\label{lem3}
Let $\tau, \rho,\theta \in\R$, and 
$$
K_{\tau,\rho} (t) = (1-|t|)(1+\tau \cos (2\rho t+\theta)),\quad \mbox{ $\forall$ $t\in [-1,1]$.}
$$
Then for any real $\upsilon$,
\begin{eqnarray*}
\int_{-1}^1K_{\tau,\rho}(t) {\rm e}^{{\rm i}2\upsilon t} \d t 
= \bigg(\frac{\sin\upsilon}{\upsilon}\bigg)^2 
+\frac{\tau {\rm e}^{{\rm i}\theta}}2 \bigg(\frac{\sin(\upsilon+ \rho)}{\upsilon+\rho}\bigg)^2 
+ \frac{\tau{\rm e}^{-{\rm i}\theta}}2\left(\frac{\sin (\upsilon -\rho)}{\upsilon-\rho}\right)^2.
\end{eqnarray*}
\end{lemma}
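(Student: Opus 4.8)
The plan is to compute the integral $\int_{-1}^1 K_{\tau,\rho}(t)\,{\rm e}^{{\rm i}2\upsilon t}\,\dd t$ by splitting the kernel into its three natural pieces. Write
\[
K_{\tau,\rho}(t) = (1-|t|) + \frac{\tau}{2}\,(1-|t|)\,{\rm e}^{{\rm i}(2\rho t+\theta)} + \frac{\tau}{2}\,(1-|t|)\,{\rm e}^{-{\rm i}(2\rho t+\theta)},
\]
using $\cos\alpha = \tfrac12({\rm e}^{{\rm i}\alpha}+{\rm e}^{-{\rm i}\alpha})$. Multiplying by ${\rm e}^{{\rm i}2\upsilon t}$ and integrating, the second term produces a factor ${\rm e}^{{\rm i}\theta}$ times $\int_{-1}^1 (1-|t|)\,{\rm e}^{{\rm i}2(\upsilon+\rho)t}\,\dd t$, and the third produces ${\rm e}^{-{\rm i}\theta}$ times the same integral with $\upsilon-\rho$ in place of $\upsilon+\rho$. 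So everything reduces to the single elementary identity
\[
\int_{-1}^1 (1-|t|)\,{\rm e}^{{\rm i}2w t}\,\dd t = \Big(\frac{\sin w}{w}\Big)^2
\]
for real $w$ (interpreted as $1$ when $w=0$), applied at $w=\upsilon$, $w=\upsilon+\rho$, $w=\upsilon-\rho$.

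Next I would establish that identity. The cleanest route is to note that $1-|t|$ on $[-1,1]$ is the autocorrelation (convolution square) of the indicator $\mathbf 1_{[-1/2,1/2]}$: concretely $(\mathbf 1_{[-1/2,1/2]}*\mathbf 1_{[-1/2,1/2]})(t) = (1-|t|)^+$. Since the Fourier transform of $\mathbf 1_{[-1/2,1/2]}$ evaluated appropriately is $\tfrac{\sin w}{w}$, the transform of the convolution square is its square. Alternatively, and perhaps more transparently for the write-up, just integrate directly: by symmetry in $t\mapsto -t$,
\[
\int_{-1}^1 (1-|t|)\,{\rm e}^{{\rm i}2w t}\,\dd t = 2\int_0^1 (1-t)\cos(2wt)\,\dd t,
\]
and a single integration by parts (or two, differentiating $1-t$) gives $2\cdot\frac{1-\cos 2w}{(2w)^2} = \frac{1-\cos 2w}{2w^2} = \big(\frac{\sin w}{w}\big)^2$ via the half-angle formula $1-\cos 2w = 2\sin^2 w$. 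The $w=0$ case is immediate since then the integrand is $1-|t|$ with integral $1$, matching the limit of $(\sin w/w)^2$.

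Assembling the three pieces with their prefactors $1$, $\tfrac{\tau}{2}{\rm e}^{{\rm i}\theta}$, $\tfrac{\tau}{2}{\rm e}^{-{\rm i}\theta}$ yields exactly the claimed formula. There is no real obstacle here; the only points requiring a word of care are the conventional reading of $(\sin w/w)^2$ at $w=0$ (so the statement makes sense when $\upsilon$, or $\upsilon\pm\rho$, vanishes), and making sure the sign in the exponent of ${\rm e}^{{\rm i}2\rho t}$ is tracked correctly so that the ${\rm e}^{{\rm i}\theta}$ term pairs with $\upsilon+\rho$ and the ${\rm e}^{-{\rm i}\theta}$ term with $\upsilon-\rho$. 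Both are bookkeeping rather than substance, so the proof is short.
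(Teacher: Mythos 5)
Your proposal is correct and follows the same route as the paper: split $\cos(2\rho t+\theta)$ into exponentials and reduce everything to the single identity $\int_{-1}^1(1-|t|)\,{\rm e}^{{\rm i}2w t}\,{\rm d}t=(\sin w/w)^2$, which is exactly the fact the paper cites as its entire proof. The only difference is that you also verify that identity (by parity and integration by parts), which the paper leaves as a standard computation.
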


This follows from 
$$
\int_{-1}^1 (1-|t|) \text{e}^{\text{i}2\upsilon t} \d t = \bigg(\frac{\sin \upsilon}{\upsilon}\bigg)^2.
$$

\vskip 8mm

\section{Proof of Theorem~\ref{thm1}}\label{proof_thm1}

Here and in the sequel, for all the implied constants in the $O$- or $\ll$-symbols, 
we shall not indicate their dependence on the parameters in (A1) and (A2)  for simplicity. 
For $x\in\R$, we denote by the symbol $\lceil{x}\rceil_+$ the {\it smallest positive} integer greater than $x$. 

Let $J'$ be defined as in Lemma~\ref{lem2} and set
$$
J_0=J' + \lceil{2A(\sigma^*-\vartheta)+\mbox{$\frac12$}}\rceil_+\ge 2.
$$
Write
$$
\vartheta(j) := \vartheta+ \frac{j}{2A} \quad \mbox{ (thus $\displaystyle \frac{j}{2A}-a = \vartheta(j) -\text{i}\xi$)}
$$
and 
$$
S_{\varphi, j}(x)
:= \sum_{n\ge 1} \frac{b_n}{\mu_n^{\vartheta(j)-\text{i}\xi}}
\int_0^\infty \frac{\varphi(u)}{u^{\vartheta(j)}} u^{\text{i}\xi}\cos\big( (h\mu_n x u)^{1/(2A)} + (k+\tfrac{1}{2}j)\pi\big) \d u.
$$
We apply Lemma~\ref{lem2} to (\ref{Sphi}) with $J_0$ in place of $J$, so together with  (A1),
\begin{equation}\label{s41}
S_\varphi(x) 
= \frac{\omega}{2Ah} \sum_{0\le j\le J_0} e_j (hx)^{1-\vartheta(j)+\text{i}\xi } S_{\varphi, j}(x)
+ O_{\varepsilon}\big(L^{-1} x^{1-\vartheta-1/(2A)}\big)
\end{equation}
for any $x\in [X/2, X]$. 

It remains to estimate  $S_{\varphi, j}(x)$ in (\ref{s41}) for $1\le j\le J_0$ .
Replacing $u^{1/(2A)}$ by $w$,  the integral in $S_{\varphi, j}(x)$ equals the real part of a scalar multiple of 
$$
\int_0^\infty \frac{\varphi(w^{2A})}{w^{2A\vartheta(j)}} w^{2A-1+\text{i}2A\xi} \text{e}^{\text{i}(h\mu_n x)^{1/(2A)} w} \d w
=\int_0^\infty F(w) \text{e}^{\text{i}Yw} \d w
\quad
\text{(say)}.
$$
Clearly, for all $r\ge 0$ the support of $F^{(r)}(w)$ is contained in the interval $|w-1|\ll L^{-1}$ due to  the support of $\varphi$,
and $F^{(r)}(w)\ll_r \sum_{j=0}^r\|\varphi^{(j)}\|_\infty\ll_r L^r$, thanks to \eqref{derivation_varphi}. 
Thus a successive integration by parts shows that for any $r\ge 0$,  
the right-side is $\ll Y^{-r}\int_0^\infty |F^{(r)}(w)| \d w\ll_r Y^{-r}L^{r-1}\ll L^{-1} (L/Y)^r$, that is,
\begin{equation}\label{ibound}
\begin{aligned}
 \int_0^\infty \frac{\varphi(u)}{u^{\vartheta(j)}} u^{\text{i}\xi}\cos\big((h\mu_n x u)^{1/(2A)} + (k+\tfrac{1}{2}j)\pi\big) \d u
& \ll L^{-1} \bigg(\frac{L}{(\mu_n x)^{1/(2A)}}\bigg)^r 
\\
& \ll_{r} L^{-1}\delta^{-r}\mu_n^{-r/(2A)}.
\end{aligned}
\end{equation}
Choose $r(j)=\lceil{2A(\sigma^*-\vartheta(j))}\rceil_+$. It follows immediately that for $j\le J_0$,
$$
S_{\varphi, j}(x) 
\ll L^{-1}\delta^{-r(j)} \sum_{n\ge 1} \frac{|b_n|}{\mu_n^{\vartheta(j)+r(j)/(2A)}} \ll_{\delta} L^{-1}.
$$
Inserting into (\ref{s41}), we get immediately the desired results.

\vskip 8mm

\section{Proof of Theorem~\ref{thm2}}\label{sec_proof}

Let $\tau =\pm 1$ be selected up to our disposal. In view of (\ref{main}), we are led to consider 
\begin{eqnarray}\label{s5.0}
\tau \frac{|b_1|}{b_1\mu_1^{\text{i}\xi}} S_{\varphi, 0}(x) + O\big(L^{-1} X^{-1/(2A)}\big) 
\end{eqnarray}
and find $x$ such that its real part is bounded below by $cL^{-1}$ for some constant $c>0$. 
We divide $S_{\varphi,0}$ into two subsums over $1\le n\le N$ and $ n>N$ respectively, 
\begin{eqnarray}\label{s5.1}
S_{\varphi,0} = S_{\varphi,0}^{\le N}+ S_{\varphi,0}^{> N}.
\end{eqnarray}
For $S_{\varphi,0}^{> N}$, we repeat the above argument with (\ref{ibound}) and 
a choice of $r=\lceil{2A(\sigma^*-\vartheta)}\rceil_+$ in order that
\begin{eqnarray}\label{s5.2}
S_{\varphi, 0}^{>N} (x) \ll   L^{-1}\delta^{-r} \sum_{n>N} \frac{|b_n|}{\mu_n^{\vartheta+r/(2A)}} \le 
\delta L^{-1}
\end{eqnarray}
for some suitably large $N=N(\delta)$.

Now we apply the method in \cite{HBT94}. Recall $k= \kappa+\text{i}\eta$  and put
\begin{eqnarray*}
K_{\tau,\rho}(t) = (1-|t|) \big\{1+\tau \cos\big(2\rho t+\kappa\pi\big)\big\},
\qquad \mbox{ $\forall$ $t\in [-1,1]$,}
\end{eqnarray*}
where $\rho := (h\mu_1)^{1/(2A)}\alpha$ and the parameter $\alpha$ will be specified later.  Then $K_{\tau,\rho}(t)\ge 0$ for all $t\in [-1,1]$ and 
\begin{eqnarray*}
\int_{-1}^1 K_{\tau,\rho} (t) \d t \le 2.
\end{eqnarray*}
Consider any $T\in [(2X)^{1/(2A)},  (3X)^{1/(2A)}]$. We have 
\begin{equation}\label{s5.3}
\int_{-1}^1 S_{\varphi,0}\big((T+2\alpha t)^{2A}\big) K_{\tau,\rho}(t) \d t 
= \int_{-1}^1 S_{\varphi,0}^{\le N}\big((T+2\alpha t)^{2A}\big) K_{\tau,\rho} (t) \d t + \mbox{RT},
\end{equation}
where the remainder term $\mbox{RT}$ satisfies $|\mbox{RT}|\le 2\delta L^{-1}$. Besides,
\begin{equation}\label{s5.3b}
\begin{aligned}
 \int_{-1}^1 S_{\varphi,0}^{\le N}\big((T+2\alpha t)^{2A}\big) K_{\tau,\rho} (t) \d t
& = 
\sum_{1\le n\le N} 
\frac{b_n}{\mu_n^{\vartheta-{\rm i}\xi}} I_n
\end{aligned}
\end{equation}
where
\begin{eqnarray*}
I_n := \int_0^\infty \frac{\varphi(u)}{u^{\vartheta}} u^{\text{i}\xi}
\int_{-1}^1 K_{\tau,\rho}(t)\cos\big((h\mu_n u)^{1/(2A)}(T+2\alpha t) + k\pi\big) \d t\d u.
\end{eqnarray*}

The inner integral inside $I_n$ equals 
\begin{equation*}
\frac12\left(\text{e}^{\text{i}((h\mu_n u)^{1/(2A)}T + k\pi)} \mathscr{I}_n^+(u)
+\text{e}^{-\text{i}((h\mu_n u)^{1/(2A)}T + k\pi)} \mathscr{I}_n^-(u)\right)
\end{equation*}
with
\begin{align*}
\mathscr{I}_n^\pm(u)
& := \int_{-1}^1 K_{\tau, \rho}(t)  \text{e}^{\pm \text{i}2\upsilon t} \d t 
\end{align*}
where $\upsilon = (h\mu_n u)^{1/(2A)}\alpha$. Let  $\varpi_n^{\pm}(u) := \alpha\big((h\mu_nu)^{1/(2A)}\pm (h\mu_1)^{1/(2A)}\big)=\upsilon\pm \rho$ and recall $\delta_*=1$ if $*$ holds and 0 otherwise. By Lemma~\ref{lem3}, we have
\begin{align*}
\mathscr{I}_n^\pm(u)
& \;= 
 \bigg(\frac{\sin \upsilon}\upsilon\bigg)^2 
+\frac{\tau {\rm e}^{{\rm i}\kappa\pi}}2 \bigg(\frac{\sin \varpi_n^{\pm}(u) }{\varpi_n^{\pm}(u) }\bigg)^2 
+ \frac{\tau{\rm e}^{-{\rm i}\kappa\pi}}2\left(\frac{\sin \varpi_n^{\mp}(u) }{\varpi_n^{\mp}(u) }\right)^2
\\
& \;= \delta_{n=1} \frac{\tau }{2} \bigg(\frac{\sin\varpi_1^{-}(u)}{\varpi_1^{-}(u)}\bigg)^2 
\text{e}^{\mp\text{i} \kappa\pi}
+ O\bigg(\frac{1}{\alpha^2\mu_n^{1/A}}\bigg)
\end{align*}
where we have tacitly used $\big|(\mu_1u)^{1/(2A)} -\mu_n^{1/(2A)}\big|\gg \mu_n^{1/(2A)}$ for $n\ge 2$ and ${|u-1|\ll L^{-1}}$
whenever $X$ is sufficiently large,  leading to $|\varpi_n^{\pm}(u)|\gg |\alpha|\mu_n^{1/(2A)}$ for all $n\ge 2$ and  $|\varpi_1^{+}(u)|\gg |\alpha|\mu_1^{1/(2A)}$. Thus the inner integral inside $I_n$ equals 
$$
\delta_{n=1} \frac{\tau }{2} \bigg(\frac{\sin\varpi_1^{-}(u)}{\varpi_1^{-}(u)}\bigg)^2 
\cos \big((h\mu_n u)^{1/(2A)}T + \mathrm{i}\eta \pi\big)
+ O\bigg(\frac{1}{\alpha^2\mu_n^{1/A}}\bigg).
$$
With \eqref{integral_varphi}, it follows that 
\begin{equation*}
I_n = \delta_{n=1} \frac{\tau }{2} I_1^* + O\bigg(\frac{1}{\alpha^2L\mu_n^{1/A}}\bigg)
\qquad
(n\ge 1),
\end{equation*}
where
$$
I_1^*
:= \int_0^\infty \frac{\varphi(u)}{u^{\vartheta}} 
\left(\frac{\sin\varpi_1^{-}(u)}{\varpi_1^{-}(u)}\right)^2 u^{\text{i}\xi}\cos\big((h\mu_1 u)^{1/(2A)}T+\text{i}\eta \pi\big) \d u.
$$
Thus for \eqref{s5.3b}, we obtain with $\tau^2=1$ that
\begin{equation}\label{s5.4}
\begin{aligned}
\tau \frac{|b_1|}{b_1\mu_1^{\text{i}\xi}} \int_{-1}^1 S_{\varphi,0}^{\le N}\big((T+2\alpha t)^{2A}\big) K_{\tau, \rho} (t) \d t
& =  \frac{|b_1|}{2 \mu_1^{\vartheta}} I_1^*
+ O\bigg(\frac{1}{\alpha^{2}L} \sum_{n=1}^N \frac{|b_n|}{\mu_n^{\vartheta +1/A}}\bigg).
\end{aligned}
\end{equation}
Again using the fact that $\varphi$ is supported on $[1-1/L, 1+1/L]$ and $L=\delta^{-1} X^{1/(2A)}$, 
we easily infer $\varpi_1^{-}(u)\ll \alpha L^{-1}$ and $u^{1/(2A)}T= T+O(\delta)$, and hence
\begin{equation}\label{s5.5} 
I_1^*
= \left\{\cos\big((h\mu_1 )^{1/(2A)}T+\text{i}\eta \pi\big)+O\big(\delta+ \alpha L^{-1}\big)\right\}  \int_0^\infty \varphi(u) \d u. 
\end{equation}

Recall $N=N(\delta)$ is determined in (\ref{s5.2}). Next  we  choose  a sufficiently  large $\alpha= \alpha(\delta)$ so that the $O$-term in (\ref{s5.4}) is $\ll \delta L^{-1}$, and then $\alpha L^{-1}\le \delta$ for $X\ge \alpha(\delta)^{2A}$. Let $X_0(\delta)\ge \alpha(\delta)^{2A}$ be specified later. Following from (\ref{s5.1})--(\ref{s5.5})  (and \eqref{integral_varphi}), we obtain
\begin{equation}\label{s5.6}
\begin{aligned}
& \tau \frac{ |b_1|}{b_1\mu_1^{\text{i}\xi}} \int_{-1}^1  S_{\varphi,0}\big((T+2\alpha t)^{2A}\big) K_{\tau, \rho} (t) \d t
\\
& = \frac{|b_1|}{2\mu_1^{\vartheta}}\cos \left((h\mu_1 )^{1/(2A)}T+\text{i}\eta \pi\right)  \int_0^\infty \varphi(u) \d u  
+ O(\delta L^{-1})
\end{aligned}
\end{equation}
provided $X\ge X_0(\delta)$. 

We choose $\delta$ small enough so that the $O$-term in (\ref{s5.6}) is less than 
$\frac14|b_1| \mu_1^{-\vartheta}L^{-1}$ in size. 
When $T= 2n\pi/(h\mu_1)^{1/(2A)}$ with $n\in\N$, the main term in (\ref{s5.6}) is equal to
$$
\frac{|b_1|}{2\mu_1^{\vartheta}} \cosh(\eta\pi) \int_0^\infty \varphi(u) \d u 
\ge \frac{|b_1|}{2\mu_1^{\vartheta}} L^{-1},
$$
by the first inequality of \eqref{integral_varphi}. 
Thus for all $X\ge X_0(\delta)$ and 
any $T= 2n\pi/(h\mu_1)^{1/(2A)}\in [(2X)^{1/(2A)}, (3X)^{1/(2A)}]$ ($n\in \N$),  there exists $T_\tau\in [T-2\alpha, T+2\alpha]$ such that 
\begin{align*}
\re \bigg(\tau \frac{2|b_1|}{b_1\mu_1^{\text{i}\xi}} S_{\varphi,0} (T_\tau^{2A})\bigg)
& \ge \int_{-1}^1 \re \left(\tau \frac{|b_1|}{b_1\mu_1^{\text{i}\xi}} S_{\varphi,0}\big((T+2\alpha t)^{2A}\big)\right) K_{\tau, \rho} (t) \d t
\\
& \ge \frac14 \frac{|b_1|}{\mu_1^{\vartheta}} L^{-1}
\end{align*}
To its end, we further enlarge $X_0(\delta)$ such that the $O$-term in (\ref{s5.0}) does not exceed $\frac16|b_1| \mu_1^{-\vartheta}L^{-1}$ for $X\ge X_0(\delta)$, and $X_0(\delta)\ge (2\alpha(\delta))^{2A}$ (so that $T>2\alpha$). A little manipulation gives the desired inequalities, completing the proof. 

\vskip 8mm

\section{Proof of Theorem~\ref{thm2.5}}\label{proof_thm2.5}

Let us begin with an evaluation for $M_\varphi(x)$ defined in \eqref{M}.

\begin{lemma}\label{lem4} 
Let $\Upsilon$ be the set of all poles of $\phi(s)$, so $\Upsilon\subset \mathcal{D}$ is finite. 
With the notation in Theorem~\ref{thm2}, 
there exists a positive integer $M$ such that
for any large $x\ge X_0(\delta)$ and any $y$ with $|y-x|\le c_0 x^{1-1/(2A)}$, we have
$$
y^{-{\rm i} \xi} M_\varphi(y) 
= \sum_{\upsilon\in \Upsilon} \sum_{0\le j<{\rm ord}_{\upsilon}} \sum_{1\le m\le M}   
c_{j, m}(\upsilon) y^{\upsilon-\mathrm{i}\xi} (\log y)^j L^{-m} 
+ O\big(y^{1-\vartheta} L^{-2}\big),
$$
where ${\rm ord}_{\upsilon}$ is the order of $\phi$ at the pole $\upsilon$, 
the coefficients $c_{j, m}(\upsilon)\in \C$ are independent of $x$ and $y$, 
and $L:=\delta^{-1} x^{1/(2A)}$.
\end{lemma}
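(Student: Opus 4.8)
The plan is to evaluate the contour integral \eqref{M} directly by residues, after extracting an asymptotic expansion of $\widehat\varphi$ in powers of $L^{-1}$ that is uniform on the compact contour $\mathcal{C}$. First I would substitute $u=1+v/L$ in the Mellin transform, obtaining
$$
\widehat\varphi(s)=\frac1L\int_{-1}^1\varphi_0(v)\,(1+v/L)^{s-1}\d v.
$$
Since $|v/L|\le 1/L\le\tfrac12$ once $X$ (hence $L$) is large, Taylor's theorem for the analytic function $w\mapsto(1+w)^{s-1}$ on $|w|<1$ gives, uniformly for $|s|\le R+1$,
$$
(1+v/L)^{s-1}=\sum_{0\le m\le M-1}\binom{s-1}{m}\Big(\frac vL\Big)^{m}+O_{M,R}\big(L^{-M}\big),
$$
and hence, after integrating against $\varphi_0$,
$$
\widehat\varphi(s)=\sum_{1\le m\le M}\gamma_m(s)\,L^{-m}+O_{M,R}\big(L^{-M-1}\big)\qquad(s\in\mathcal{C}),
$$
where $\gamma_m(s):=\binom{s-1}{m-1}\int_{-1}^1\varphi_0(v)v^{m-1}\d v$ is a polynomial in $s$ not depending on $X$; its leading instance $\gamma_1(s)=\int_\R\varphi_0$ is consistent with \eqref{UB_hatvarphi} at $r=0$.

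Next I would insert this expansion into \eqref{M} and integrate term by term (a finite sum plus a tail over a finite contour, so there is no convergence issue). Because $\phi$ is holomorphic on the compact set $\mathcal{C}$ and $\re s\le R$ there, the tail contributes at most
$$
\frac1{2\pi}\int_{\mathcal{C}}|\phi(s)|\,O\big(L^{-M-1}\big)\,|y|^{\re s}\,|\d s|\ll_{M,R}L^{-M-1}y^{R}.
$$
Using $L=\delta^{-1}x^{1/(2A)}\asymp y^{1/(2A)}$ and $y\asymp x$ (valid since $|y-x|\le c_0x^{1-1/(2A)}$), a choice of $M$ with $M\ge 1+2A(R-1+\vartheta)$ renders this $\ll y^{1-\vartheta}L^{-2}$. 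For each of the remaining terms, $\gamma_m(s)y^s$ is entire, so inside $\mathcal{C}$ the function $\phi(s)\gamma_m(s)y^s$ has poles only at the points $\upsilon\in\Upsilon\subset\mathcal{D}$, and the residue theorem gives
$$
\frac1{2\pi\i}\int_{\mathcal{C}}\phi(s)\gamma_m(s)y^s\d s=\sum_{\upsilon\in\Upsilon}\operatorname*{Res}_{s=\upsilon}\big[\phi(s)\gamma_m(s)y^s\big].
$$

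Finally I would compute each residue. Writing $\ell={\rm ord}_{\upsilon}$ and $y^s=y^{\upsilon}\e^{(s-\upsilon)\log y}$, the residue is $\frac1{(\ell-1)!}$ times the $(\ell-1)$-th derivative at $s=\upsilon$ of the holomorphic function $(s-\upsilon)^{\ell}\phi(s)\gamma_m(s)\,y^{\upsilon}\e^{(s-\upsilon)\log y}$; expanding by the Leibniz rule, each factor $\log y$ comes from a derivative hitting the exponential, so the residue equals $y^{\upsilon}$ times a polynomial in $\log y$ of degree $\le\ell-1$ whose coefficients are assembled solely from the Laurent coefficients of $\phi$ at $\upsilon$ and the Taylor coefficients of $\gamma_m$ at $\upsilon$, and therefore do not depend on $x$ or $y$. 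Collecting all contributions, naming these coefficients $c_{j,m}(\upsilon)$ for $0\le j<{\rm ord}_{\upsilon}$, and multiplying by $y^{-\i\xi}$ (of modulus $1$, so the error is unaffected) yields precisely the asserted formula. The \emph{only delicate point} is the calibration of $M$: since $\phi$ may have poles as far to the right as $\re s=R$, the integer $M$ must be chosen large in terms of $R$, $\vartheta$ and $A$ so that the tail of the $\widehat\varphi$-expansion is absorbed into the target error $O(y^{1-\vartheta}L^{-2})$; everything else is routine residue bookkeeping.
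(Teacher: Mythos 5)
Your proposal is correct and is essentially the paper's argument: both rest on the residue theorem over $\mathcal{C}$ combined with a Taylor expansion of $\widehat{\varphi}$ in powers of $L^{-1}$ (via the substitution $u=1+v/L$) and a choice of $M$ large in terms of $R$, $\vartheta$ and $A$ so the truncation error is absorbed into $O\big(y^{1-\vartheta}L^{-2}\big)$. The only difference is the order of the two steps --- the paper computes the residues first, producing the values $\widehat{\varphi}^{(i)}(\upsilon)$, and then expands those, whereas you expand $\widehat{\varphi}(s)$ uniformly on the contour (using boundedness of $\phi$ on $\mathcal{C}$) and then take residues term by term --- which is an equivalent bookkeeping of the same proof.
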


\begin{proof}
Applying the residue theorem to the integral in \eqref{M}, we obtain
$$
M_\varphi(y) 
= \sum_{\upsilon\in \Upsilon} \sum_{\substack{i\ge 0, \, j\ge 0\\ i+j<{\rm ord}_{\upsilon}}}
d_{i, j}(\upsilon) y^{\upsilon} \widehat{\varphi}^{(i)}(\upsilon) (\log y)^j
$$
for some coefficients $d_{i, j}(\upsilon)\in \C$.
By Taylor's theorem, we have the expansion
\begin{equation}\label{s6.1}
\begin{aligned}
\widehat{\varphi}^{(i)}(\upsilon) 
& = \frac{1}{L} \int_{-1}^1 \varphi_0(u) (1+uL^{-1})^{\upsilon-1} \log^i(1+uL^{-1}) \d u
\\
& = \sum_{1\le m\le M} d_{i, m}'(\upsilon) L^{-m} + O_{\upsilon, i, M}(L^{-M-1}).
\end{aligned}
\end{equation}
Since $L\asymp x^{1/(2A)}\asymp y^{1/(2A)}$, 
we take $M$ sufficiently large in \eqref{s6.1} and insert into the preceding formula. 
The lemma follows. 
\end{proof}

Next we prove the claim: Either condition of (a)-(c) in Theorem~\ref{thm2.5} implies that
\begin{eqnarray}\label{s6.2}
\lambda_{n+1}-\lambda_n\ge (4\delta +2c_0)\lambda_n^{1-1/(2A)}
\end{eqnarray}
holds only for finitely many $n$'s, where  $\delta$ and $c_0$ are the constants in Theorem~\ref{thm2}.

Consider any sufficiently large $\lambda_n\ge X_0(\delta)$ and take $x= (\lambda_n+\lambda_{n+1})/2$, the mid-point of $\lambda_n$ and $\lambda_{n+1}$. Since $\lambda_n< x$ and $0<2A<1$,  $\lambda_n^{1-1/(2A)}> x^{1-1/(2A)}$. If \eqref{s6.2} holds, then 
$$
\lambda_n< x-  (2\delta+c_0)x^{1-1/(2A)}< x+  (2\delta+c_0)x^{1-1/(2A)}<  \lambda_{n+1}.
$$
By Theorem~\ref{thm2} with this choice  of $x$ and observing 
$$
\lambda_n< x_\pm - 2\delta x^{1-1/(2A)} < x_\pm + 2\delta x^{1-1/(2A)} < \lambda_{n+1},
$$
we infer that $S_\varphi(x_\pm)=-M_\varphi(x_\pm)$ because $\varphi(\lambda_m/x_\pm)=0$ for all $m$, and  that
\begin{eqnarray}\label{s6.3}
\mp \re\bigg(\varsigma^{-1} \frac{M_\varphi(x_\pm)}{ ( \mu_1hx_\pm)^{{\rm i}\xi}}\bigg)
> c_1 x^{1-\vartheta}L^{-1} 
\end{eqnarray}
for some  constant $c_1>0$.

Now let us invoke  individually the conditions in Theorem~\ref{thm2.5}.

(a) 
Assume $\phi(s)$ has no pole. Then $M_\varphi(x)\equiv 0$ and thus \eqref{s6.3} cannot happen. 

(b) 
Suppose $\xi=0$ and $\Upsilon\subset \R$. Lemma~\ref{lem4} gives
$$
\re\bigg( \frac{\varsigma^{-1} M_\varphi(y)}{(\mu_1hy)^{{\rm i}\xi}}\bigg)
= \sum_{\upsilon\in \Upsilon} \sum_{0\le j<{\rm ord}_{\upsilon}} \sum_{1\le m\le M}   
c_{j, m}'(\upsilon)\, y^{\upsilon} (\log y)^j L^{-m} 
+ O\big(y^{1-\vartheta} L^{-2}\big),
$$
where $c_{j, m}'(\upsilon)\in\R$, $L=\delta^{-1} x^{1/(2A)}$ and $|y-x|\ll x^{1-1/(2A)}$. 
For such $y$ and  large $x$, the multiple sum on the right can be expressed as
$$ 
Q(x)
:= \big\{1+O\big(x^{-1/(2A)}\big)\big\} 
\sum_{\upsilon\in \Upsilon} \sum_{0\le j<{\rm ord}_{\upsilon}} \sum_{1\le m\le M} 
c_{j, m}'(\upsilon)\, x^{\upsilon} (\log x)^j L^{-m}.
$$
As $y^{1-\vartheta} L^{-2}=o(x^{1-\vartheta} L^{-1})$,  we infer by \eqref{s6.3} that $\mp Q(x) > \frac12 c_1x^{1-\vartheta}L^{-1}$ occurs simultaneously for the same $x$, which is impossible. 

(c)
Suppose $\re \upsilon< \vartheta_0=1-\vartheta+1/(2A)$ for all $\upsilon\in \Upsilon$. 
In this case, Lemma~\ref{lem4} implies
$$
\re\bigg( \frac{\varsigma^{-1} M_\varphi(y)}{ ( \mu_1hy)^{{\rm i}\xi}}\bigg)
=\widetilde{Q}(y)+ O\big(y^{1-\vartheta} L^{-2}\big),
$$
where
$$
\widetilde{Q}(y)
= \sum_{\upsilon\in \Upsilon} \sum_{0\le j<{\rm ord}_{\upsilon}} \sum_{1\le m\le M} 
c_{j, m}''(\upsilon)\, y^{\re\upsilon} (\log y)^j L^{-m} 
\cos\big((\im\upsilon-\xi)\log y + \theta_{j, m}(\upsilon)\big)
$$
for some real constants $c_{j, m}''(\upsilon)$ and $\theta_{j, m}(\upsilon)$. 
As $|y-x|\ll x^{1-1/(2A)}$, it follows that
$$
\widetilde{Q}(y)= \widetilde{Q}(x) + O\big(x^{\upsilon_{*}-1/(2A)}(\log x)^M  L^{-1}\big)
$$
where $\upsilon_{*} := \max\{\re\upsilon \, : \, \upsilon\in \Upsilon\}<\vartheta_0$. 
As $\upsilon_{*}-1/(2A)< 1-\vartheta$, from \eqref{s6.3} we deduce  concurrently the two inequalities $\mp \widetilde{Q}(x) > \frac12 c_1x^{1-\vartheta}L^{-1}$ once $x$ is large enough, a contradiction.

In summary, we have shown that \eqref{s6.2} cannot hold for any sufficiently large $\lambda_n$; in other words, for some $n_0$ and for  all  $n\ge n_0$,
$$
 \lambda_{n+1}-\lambda_n \ll  \lambda_n^{1-1/(2A)}
$$
with the implied constant independent of $n$. 

\vskip 3mm

\noindent 
{\bf Acknowledgements.} 
Lau is supported by GRF 17302514 of  the Research Grants Council of Hong Kong.
Liu is supported in part by NSFC grant 11531008, 
and Liu and Wu are supported in part 
by IRT1264 from the Ministry of Education.

\vskip 8mm

\end{document}